\DeclarePairedDelimiter\norm{\lvert}{\rvert}
\DeclarePairedDelimiter\inner{\langle}{\rangle}
\numberwithin{equation}{section}
\newcounter{intro}
		\newtheorem{introthm}[intro]{Theorem}
		\newtheorem{thm}[equation]{Theorem}
		\newtheorem{lem}[equation]{Lemma}
		\newtheorem{cor}[equation]{Corollary}
\theoremstyle{remark}
\theoremstyle{definition}
		\newtheorem{exam}[equation]{Example}
\def\xstrut{\rule{-4pt}{-4ex}}
\title[Supercharacter Theory via the Group Determinant]{Supercharacter Theory\\ via the Group Determinant}
\author{Shawn T. Burkett}
\address{Department of Mathematical Sciences, Kent State University, Kent,
Ohio 44240, U.S.A.} \email{sburket1@kent.edu}
\date{\today}
\subjclass[2010]{20C15}
\keywords{group determinant;schur ring;supercharacter theory}
\begin{document}
\maketitle
\begin{abstract}
Ferdinand Georg Frobenius is generally considered the creator of character theory of finite groups. This achievement came from the study of the group determinant, which is the determinant of a matrix coming from the regular representation. In this paper, we generalize several of Frobenius' results about the group determinant and use them find a new formulation of supercharacter theory in terms of factorizations of the group determinant.
\end{abstract}
\section{Introduction}
Character theory of finite groups is a subject with a rich and fascinating origin story. Although characters of abelian groups were used at the time of Gauss, the notion of group characters for nonabelian groups did not exist until 1896, when Ferdinand Georg Frobenius discovered a suitable notion. To describe Frobenius' method, we must begin by defining the {\it group determinant}. Let $G$ be a finite group and let $x_g$ be a set of independent indeterminates indexed by the elements of $G$. The {\it group matrix} is the matrix whose rows and columns are indexed by the elements of $G$, and whose $(g,h)$-entry is $x_{gh^{-1}}$. The group determinant $\Theta(G)$ is the determinant of the group matrix, and is a homogeneous polynomial monic in $x_1$ (if there is no ambiguity about the group $G$, we will simply write $\Theta$). Analysis of $\Theta$ allowed Frobenius to develop a theory of group characters for nonabelian groups that generalized the existing notion for abelian groups.

Frobenius' work on the group determinant began after a private correspondence with Richard Dedekind, whose work Frobenius saught to generalize. Dedekind had been studying the group determinant of abelian groups and proved that $\Theta$ factors completely into linear factors when $G$ is an abelian group:
\[\Theta=\prod_{\chi\in\mathrm{Irr}(G)}\sum_{g\in G}\chi(g)x_g.\]
In particular, he illustrated that the characters of $G$ can be recovered from the factorization of $\Theta$ when $G$ is abelian. He also showed that if $G$ is nonabelian, $\Theta$ does not factor into linear factors. In fact, Dedekind believed that the linear factors of $\Theta$ come from the factors of the group determinant of $\norm{G:[G,G]}$ in general, although a proof evaded him. 

The developments that proceeded the initial correspondence between Frobenius and Dedekind are truly remarkable. Within one year, Frobenius succeeded in generalizing finite group characters to all nonabelian groups, as well as discovering several results now considered foundational to the subject of character theory. These developments entirely arose from his work regarding the factorization of the group determinant---a beautiful, yet antiquated approach that seems far removed from the modern perspective. We summarize a few of these results here; however, we refer the reader to \cite{hawkinsorigins} for a concise overview of the work of Dedekind, Frobenius, and the origins of character theory, and to \cite{pioneers} for a thorough invetigation into the origins of representation theory. We briefly mention here that Kenneth Johnson has done considerable work related to the group determinant and representation theory, as well as their connections to S-rings, quasigroups, loops, projective character theory, etc. We refer the reader to \cite{KJgroupdet} for more information.
\begin{thm}[Frobenius]\label{frobenius}
Let $G$ be a finite group with $n$ conjugacy classes $C_1$, $C_2,\dotsc,C_n$. The following statements hold.
\begin{enumerate}[label={\bf(\arabic*)}]
\item The group determinant $\Theta$ has $n$ irreducible factors $\Phi_i$, each monic in $x_1$, and each having degree equal to its multiplicity in the factorization of $\Theta$. That is 
\[\Theta=\prod_{i=1}^n\Phi_i^{d_i},\]
where each $\Phi_i$ is irreducible of degree $d_i$, and monic in $x_1$. 
\item If the condtions $x_{gh}=x_{hg}$ for every $g,h\in G$ are imposed on the indeterminates $x_g$, $g\in G$, then $\Theta$ is a polynomial in the indeterminates $x_1,\dotsc,x_n$, where $x_g=x_i$ for every $g\in C_i$; moreover, $\Theta$ is a product of linear factors:
\[\Theta=\prod_{i=1}^n\left(\,\sum_{j=1}^n \frac{\norm{C_j}\chi_i^j}{d_i}x_j\right)^{\xstrut d_i^2}.\]
\end{enumerate}
\end{thm}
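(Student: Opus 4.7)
My plan is to identify $\Theta$ with the determinant of a generic group-algebra element acting via the regular representation, and then exploit the Wedderburn decomposition of $\mathbb{C}[G]$ to split this determinant according to its irreducible constituents. To this end, introduce the generic element $X = \sum_{g\in G} x_g g$ over the polynomial ring $\mathbb{C}[x_g : g\in G]$, and let $L$ denote the left regular representation. A direct computation shows that the matrix of $L(X)$ in the basis $(h)_{h \in G}$ has $(k,h)$-entry $x_{kh^{-1}}$, so $\Theta = \det L(X)$.

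By the Wedderburn decomposition $\mathbb{C}[G] \cong \bigoplus_i M_{d_i}(\mathbb{C})$, the regular representation is isomorphic to $\bigoplus_{i=1}^n d_i \rho_i$, where $\rho_1,\dotsc,\rho_n$ are the inequivalent irreducible representations of $G$ of degrees $d_1,\dotsc,d_n$. Setting $\Phi_i := \det \rho_i(X)$, which is homogeneous of degree $d_i$ in the $x_g$ and monic in $x_1$ (since $\rho_i(1) = I_{d_i}$), we immediately obtain $\Theta = \prod_i \Phi_i^{d_i}$. The nontrivial content of part (1) is then the irreducibility of each $\Phi_i$ together with their pairwise coprimality. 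Both follow from a single observation: by Schur orthogonality for matrix coefficients, the $\sum_i d_i^2 = \norm{G}$ functions $g \mapsto (\rho_i(g))_{k,l}$ form a basis of $\mathbb{C}^G$, so the linear forms $(\rho_i(X))_{k,l} = \sum_g (\rho_i(g))_{k,l} x_g$ are themselves linearly independent in $\mathbb{C}[x_g : g \in G]$. Performing the change of variables that replaces the $x_g$ by these linear forms turns each $\rho_i(X)$ into a matrix of algebraically independent indeterminates, with disjoint indeterminate sets for distinct $i$. Pairwise coprimality of the $\Phi_i$ is then immediate (they live in polynomial rings over disjoint variable sets), and the irreducibility of $\Phi_i$ reduces to the classical fact that the determinant of a generic $d_i \times d_i$ matrix is irreducible.

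Part (2) is considerably shorter. Once the relations $x_{gh} = x_{hg}$ are imposed, the element $X$ becomes central in $\mathbb{C}[G]$: writing $K_j = \sum_{g \in C_j} g$ for the $j$-th class sum, we have $X = \sum_j x_j K_j$, and the $K_j$ form a basis of the center. Schur's lemma then gives $\rho_i(X) = \lambda_i I_{d_i}$ for some scalar $\lambda_i$, which I would read off by taking traces:
\[
d_i \lambda_i = \chi_i(X) = \sum_{j=1}^n \norm{C_j}\chi_i^j x_j, \qquad \text{so} \qquad \lambda_i = \sum_{j=1}^n \frac{\norm{C_j}\chi_i^j}{d_i}\, x_j.
\]
Substituting $\Phi_i = \det(\lambda_i I_{d_i}) = \lambda_i^{d_i}$ into the factorization of part (1) yields the stated product. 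The main obstacle, as I see it, will be part (1), specifically the linear-independence observation that simultaneously delivers coprimality and irreducibility; once that is in hand, everything else is bookkeeping.
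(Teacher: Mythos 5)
Your argument is correct. Note, however, that the paper does not prove Theorem~\ref{frobenius} at all: it is quoted as Frobenius's classical result, with the reader referred to \cite{hawkinsorigins} and \cite{pioneers}, so there is no in-paper proof to compare against. What you have written is the standard modern proof, and it is exactly the perspective the paper gestures at immediately after the statement, where $\Theta$ is identified with $\det\left(\sum_{g}\rho(g)x_g\right)$ for $\rho$ the regular representation. Your reduction of part (1) to the linear independence of the $\sum_i d_i^2=\norm{G}$ matrix coefficients $g\mapsto(\rho_i(g))_{k,l}$ is the right key step: it makes the passage to $\norm{G}$ new independent indeterminates an invertible linear change of variables, after which irreducibility of each $\Phi_i$ is the irreducibility of a generic determinant and coprimality follows because distinct $\Phi_i$ are nonconstant polynomials in disjoint variable sets (hence not associates). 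One small remark: that linear independence is perhaps most cleanly obtained not from orthogonality (which requires first unitarizing the $\rho_i$) but from the surjectivity of $\mathbb{C}G\to\bigoplus_i M_{d_i}(\mathbb{C})$, which shows directly that no nontrivial linear relation among the coefficient functions can hold; either route is fine. Part (2) as you give it --- centrality of $X$ under the imposed relations, Schur's lemma, and reading off the scalar by taking traces --- is complete and is essentially how the paper itself later extracts the linear factors for superclass sums via the homomorphisms $\omega_\psi$ in Lemma~\ref{omega}, since your $\lambda_i$ is precisely $\sum_j\omega_{\chi_i}\bigl(\widehat{C_j}\bigr)x_j$.
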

For each $1\le i\le n$, the numbers $\chi_i^j$ give rise to a class function $\chi_i$ on $G$ defined by $\chi_i(g)=\chi_i^j$ if $g\in C_j$, and satisfying $\chi_i(1)=d_i$. These are the characters of $G$ as defined by Frobenius, and they coincide with the current notion of the irreducible characters of $G$. 

The determinant $\Theta$ can be realized from a more modern representation theoretic approach by the formula
\[\Theta=\det\left(\sum_{g\in G}\rho(g)x_g\right),\]
where $\rho$ is the regular representation of $G$ (see \cite{hawkinsorigins}). A similar perspective can be taken to explain the linear factors of the conjugacy class version of the group determinant described in Theorem~\ref{frobenius} (2): If $\mu$ denotes the regular representation of $Z(\mathbb{C}{G})$, then (cf. \cite[Theorem 6.1]{hawkinsorigins})
\[\det\left(\sum_{i=1}^n\mu\bigl(\widehat{C_i}\bigr)x_i\right)=\prod_{i=1}^n\left(\,\sum_{j=1}^n \frac{\norm{C_j}\chi_i^j}{d_i}x_j\right).\]
From this perspective, it is therefore natural to ask if a result similar to Theorem~\ref{frobenius} (2) holds for any other subalgebras of $Z(\mathbb{C}{G})$. The point of this paper is to show that this is, in fact, the case when one considers very specific subalgebras, called supercharacter theories. 

A {\it supercharacter theory} may be defined as a subspace of $Z(\mathbb{C}{G})$ that is a unital subalgebra with respect to two different products: (1) the {\it ordinary product} defined by $\sum_ga_gg\cdot\sum_gb_gg=\sum_{g,h}a_gb_hgh$, and (2) the {\it Hadamard product} defined by  $\sum_ga_gg\circ\sum_gb_gg=\sum_ga_gb_gg$. This is not the definition used by Diaconis--Isaacs  in \cite{ID07}, where the foundations of supercharacter theory were formulated; however it is shown in \cite{AH12} to be equivalent and will prove more useful for our present considerations.

Although we will discuss supercharacter theory in more detail in Section~\ref{supersection}, we must at this point briefly discuss some fundamentals in order to state our main results. By determining a basis of orthogonal idempotents of a supercharacter theory $\mathsf{S}$ of $G$ with respect to the Hadamard product, one obtains a partition $\mathcal{K}$ of $G$ into {\it superclasses}, each of which is a union of conjugacy classes of $G$.  With respect to ordinary product, one obtains a set $\mathrm{BCh}(\mathsf{S})$ of $\norm{\mathcal{K}}$ mutually orthogonal {\it supercharacters} that are constant on the parts of $\mathcal{K}$, the constituents of which encompass all of $\mathrm{Irr}(G)$. When the supercharacter theory $\mathsf{S}$ is given, we will refer to the superclasses as $\mathsf{S}$-{\it classes} and the supercharacters as {\it basic $\mathsf{S}$-characters}.

Our first main result is the following generalization of Theorem~\ref{frobenius} (2).

\begin{introthm}\label{thmA}
Let $\mathsf{S}$ be a supercharacter theory of the group $G$. Let $C\subseteq G$ denote a set of $\mathsf{S}$-class representatives for $G$, and let $x_g$ be independent indeterminates indexed by the elements of $C$. Let $\kappa:G\to C$ be the function that sends an element $g\in G$ to its $\mathsf{S}$-class representative lying in $C$. Let $\Theta_{\mathsf{S}}$ be the determinant $\norm{x^{}_{\kappa(gh^{-1})}}$. Then
\[\Theta_{\mathsf{S}}=\prod_{\chi\in\mathrm{BCh}(\mathsf{S})}\left(\sum_{g\in C}\frac{\chi(g)\norm{\mathrm{cl}_{\mathsf{S}}(g)}}{\chi(1)}\;x_g\right)^{\xstrut\chi(1)}.\]
\end{introthm}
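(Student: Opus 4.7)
The plan is to realize $\Theta_{\mathsf{S}}$ as the determinant of left multiplication on $\mathbb{C}G$ by a single central element of the group algebra, and then to diagonalize this operator using the primitive idempotents of $\mathsf{S}$ under the ordinary product.

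Set $X=\sum_{g\in G}x_{\kappa(g)}\,g$. Since $x_{\kappa(g)}$ depends only on the $\mathsf{S}$-class of $g$, the element $X$ lies in the scalar extension of $\mathsf{S}\subseteq Z(\mathbb{C}G)$ and is therefore central. A direct computation shows that the matrix of left multiplication $\lambda_X$ in the basis $\{g:g\in G\}$ has $(g,h)$-entry $x_{\kappa(gh^{-1})}$, so $\Theta_{\mathsf{S}}=\det(\lambda_X)$. Because $\mathsf{S}$ is a commutative semisimple subalgebra of $Z(\mathbb{C}G)$ of dimension $|\mathrm{BCh}(\mathsf{S})|$ under the ordinary product, it admits a decomposition $1=\sum_\chi e_\chi$ into primitive pairwise orthogonal idempotents indexed by $\chi\in\mathrm{BCh}(\mathsf{S})$. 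Each $e_\chi$ is also a central idempotent of $\mathbb{C}G$; the key structural input from supercharacter theory is that $e_\chi$ equals the sum of the Wedderburn central idempotents of the irreducible constituents of $\chi$, so that with the Diaconis--Isaacs normalization $\chi=\sum_\psi\psi(1)\psi$ we have $\dim(e_\chi\mathbb{C}G)=\sum_\psi\psi(1)^2=\chi(1)$.

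Writing $Xe_\chi=\omega_\chi(X)\,e_\chi$ for the scalar by which the central element $X$ acts on the two-sided ideal $e_\chi\mathbb{C}G$, the operator $\lambda_X$ becomes block-diagonal with scalar $\omega_\chi(X)$ on a subspace of dimension $\chi(1)$, yielding
\[\Theta_{\mathsf{S}}=\det(\lambda_X)=\prod_{\chi\in\mathrm{BCh}(\mathsf{S})}\omega_\chi(X)^{\chi(1)}.\]
It remains to identify $\omega_\chi(X)$ with the linear form in the statement. By linearity, $\omega_\chi(X)=\sum_{g\in C}x_g\,\omega_\chi(\widehat{\mathrm{cl}_{\mathsf{S}}(g)})$, where $\widehat{K}$ is the class sum of $K$. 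For any irreducible constituent $\psi$ of $\chi$, the central character $\omega_\psi$ restricts to $\omega_\chi$ on $\mathsf{S}$, and the standard formula gives $\psi(1)\omega_\psi(\widehat{K})=\sum_{h\in K}\psi(h)$. Multiplying by $\psi(1)$ and summing over the constituents of $\chi$, the right-hand side becomes $\sum_{h\in K}\chi(h)=|K|\chi(g_K)$ (using that $\chi$ is constant on the $\mathsf{S}$-class $K$), while the left-hand side is $\chi(1)\omega_\chi(\widehat{K})$. Therefore $\omega_\chi(\widehat{K})=|\mathrm{cl}_{\mathsf{S}}(g_K)|\chi(g_K)/\chi(1)$, and substituting produces exactly the linear factor in the theorem.

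The main obstacle is the structural fact invoked above: that the central characters of all irreducible constituents of a single basic supercharacter $\chi$ agree upon restriction to $\mathsf{S}$, so that the primitive idempotents of $\mathsf{S}$ correspond bijectively to $\mathrm{BCh}(\mathsf{S})$ with the block dimensions equal to $\chi(1)$. This is precisely where the defining properties of a supercharacter theory---the simultaneous subalgebra condition under both the ordinary and Hadamard products, equivalently the compatible partitions of $G$ and $\mathrm{Irr}(G)$---must enter, and it should either be established in Section~\ref{supersection} or quoted from \cite{ID07,AH12}.
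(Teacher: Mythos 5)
Your proof is correct, but it takes a genuinely different route from the paper's. The paper deduces the theorem from Frobenius's factorization of the conjugacy-class group determinant (Theorem~\ref{frob}): it applies the specialization homomorphism $x_h\mapsto x_{\kappa(h)}$ to that factorization, then uses Lemma~\ref{omega} to observe that the linear factors attached to the various $\psi\in\mathrm{Irr}(\chi)$ all coincide after specialization, so their exponents $\psi(1)^2$ add up to $\chi(1)$. You instead give a self-contained argument: realize $\Theta_{\mathsf{S}}$ as $\det(\lambda_X)$ for the central element $X=\sum_g x_{\kappa(g)}g$, block-diagonalize $\mathbb{C}G$ along the idempotents $E_\chi=\sum_{\psi\in\mathrm{Irr}(\chi)}e_\psi$, and read off the determinant as $\prod_\chi\omega_\chi(X)^{\chi(1)}$. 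In effect you reprove Frobenius's theorem (the case $\mathsf{S}=Z(\mathbb{C}G)$) rather than quoting it, which makes the argument more transparent about where the exponent $\chi(1)=\sum_{\psi\in\mathrm{Irr}(\chi)}\psi(1)^2$ comes from; the paper's reduction is shorter given that Theorem~\ref{frob} is already on the table. The structural fact you correctly flag as the crux --- that $\omega_\psi$ restricted to $\mathsf{S}$ is constant as $\psi$ ranges over $\mathrm{Irr}(\chi)$, with the blocks indexed by $\mathrm{BCh}(\mathsf{S})$ --- is exactly the paper's Lemma~\ref{omega}, whose proof is the one-line observation that $\mathsf{S}$ is spanned by the $E_\chi$, so the coefficients of $e_\psi$ in any element of $\mathsf{S}$ are constant on each part of $\mathrm{Ch}(\mathsf{S})$; your derivation of the value $\omega_\chi(\widehat{K})=|K|\chi(g_K)/\chi(1)$ by summing the standard central-character formula over $\mathrm{Irr}(\chi)$ is also sound and matches what the paper cites from the proof of Theorem 2.4 of \cite{ID07}.
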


We show that Theorem~\ref{thmA} has somewhat of a converse; this allows us to give a new characterization of supercharacter theories in terms of factorizations of the group determinant. 

\begin{introthm}\label{thmB}
Let $\mathcal{P}$ be a partition of $G$ of length $\ell$ into $G$-invariant parts, and assume that $\{1\}\in\mathcal{P}$. Let $C\subseteq G$ be a set of representatives for the parts of $\mathcal{P}$, and let $x_g$ be independent indeterminates indexed by the elements of $C$.  Let $\kappa:G\to C$ be the function that maps an element of $G$ to its representative in $C$.  Let $\Theta_{\mathcal{P}}$ be the determinant $\norm{x^{}_{\kappa(gh^{-1})}}$.  The partition $\mathcal{P}$ gives the superclasses for a supercharacter theory $\mathsf{S}$ of $G$ if and only if $\Theta_{\mathcal{P}}$ factors into a product of powers of $\ell$ distinct linear factors, each monic in $x_1$. In this event, if 
\[\Theta_{\mathcal{P}}=\prod_{j=1}^\ell\left(\sum_{g\in C}\xi_g^j x_g\right)^{\xstrut m_j}\]
is the factorization of $\Theta_{\mathcal{P}}$ into products of powers of linear factors that are monic in $x_1$, then the basic $\mathsf{S}$-characters are the functions $\chi_j:G\to\mathbb{C}$ defined by
\[\chi_j(g)=\frac{m_j\xi_{\kappa(g)}^j}{\norm{K_g}}.\]
Moreover, $\chi_j(1)=m_j$ for each $1\le j\le \ell$.
\end{introthm}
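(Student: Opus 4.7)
The plan is to use the regular representation to compute $\Theta_{\mathcal{P}}$, obtain a universal factorization indexed by $\mathrm{Irr}(G)$, and then reduce the problem to a dimension argument linking $\ell$ to the subalgebra of $Z(\mathbb{C} G)$ spanned by the class sums $\widehat{P} = \sum_{g\in P}g$. Since each $P$ is $G$-invariant, $\widehat{P}\in Z(\mathbb{C} G)$, and $\Theta_{\mathcal{P}} = \det\bigl(\rho\bigl(\sum_P x_P \widehat{P}\bigr)\bigr)$ for $\rho$ the regular representation. Decomposing $\rho$ into isotypic components and using that $\widehat{P}$ acts on the $\chi$-component as the scalar $\omega_\chi(\widehat{P})$ yields
\[
\Theta_{\mathcal{P}} \;=\; \prod_{\chi\in\mathrm{Irr}(G)}\Bigl(\sum_{P\in\mathcal{P}}\omega_\chi(\widehat{P})\,x_P\Bigr)^{\chi(1)^2},
\]
and each inner form is monic in $x_1$ because $\omega_\chi(1)=1$. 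Declaring $\chi\sim\chi'$ iff $\omega_\chi(\widehat{P})=\omega_{\chi'}(\widehat{P})$ for every $P$, letting $X_1,\dotsc,X_m$ be the $\sim$-classes, and setting $m_j:=\sum_{\chi\in X_j}\chi(1)^2$, one collects like terms to express $\Theta_{\mathcal{P}}$ as a product of $m$ distinct linear factors $L_j=\sum_P\xi_P^j x_P$, each monic in $x_1$ and raised to the $m_j$-th power. By uniqueness this is the factorization displayed in the theorem, so the hypothesis amounts to $\ell = m$.

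\medskip

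I would then show that $\ell = m$ if and only if $\mathcal{A}:=\mathrm{span}\{\widehat{P} : P\in\mathcal{P}\}$ is a subalgebra of $Z(\mathbb{C} G)$. The central characters give an algebra isomorphism $Z(\mathbb{C} G)\cong\mathbb{C}^{\mathrm{Irr}(G)}$ under which $\mathcal{A}$ embeds as an $\ell$-dimensional subspace of the $m$-dimensional subalgebra $V=\{v : v_\chi=v_{\chi'}\text{ whenever }\chi\sim\chi'\}$, giving $\ell\le m$. Subalgebras of $\mathbb{C}^{\mathrm{Irr}(G)}$ are exactly the spaces of vectors constant on the classes of some equivalence relation, and $\sim$ is, by construction, the finest such relation satisfied by the image of $\mathcal{A}$; hence the image is a subalgebra iff it equals $V$ iff $\ell=m$. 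Because $\{1\}\in\mathcal{P}$ and $\mathcal{P}$ partitions $G$, $\mathcal{A}$ contains both the ordinary identity $1$ and the Hadamard identity $\sum_{g\in G}g$, while the $\widehat{P}$ are orthogonal idempotents for $\circ$; so $\mathcal{A}$ is automatically a unital $\circ$-subalgebra. Under the characterization of supercharacter theories recalled in the introduction, $\mathcal{P}$ therefore gives a supercharacter theory iff $\mathcal{A}$ is closed under the ordinary product, iff $\ell = m$.

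\medskip

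Granted these equivalent conditions, I would take the basic $\mathsf{S}$-characters to be $\chi_j=\sum_{\chi\in X_j}\chi(1)\chi$, so $\chi_j(1)=m_j$ at once, and a brief computation with the central idempotent $e_{X_j}=\sum_{\chi\in X_j}e_\chi\in\mathcal{A}$ (expressing $e_{X_j}$ in the basis $\{\widehat{P}\}$) verifies that $\chi_j$ is constant on each superclass. Then for $g\in G$ with superclass $K_g = P$,
\[
|K_g|\,\chi_j(g) \;=\; \sum_{h\in P}\chi_j(h) \;=\; \sum_{\chi\in X_j}\chi(1)^2\,\omega_\chi(\widehat{P}) \;=\; m_j\,\xi_{\kappa(g)}^j,
\]
which rearranges to the claimed formula. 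The main obstacle is the dimension-counting step identifying $\ell = m$ with $\mathcal{A}$ being a subalgebra; everything else reduces to routine manipulation of central characters and the standard correspondence between unital subalgebras of $Z(\mathbb{C} G)$ and partitions of $\mathrm{Irr}(G)$.
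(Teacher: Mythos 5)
Your proposal is correct and follows essentially the same route as the paper: both obtain the factorization $\prod_{\chi\in\mathrm{Irr}(G)}\bigl(\sum_{P}\omega_\chi(\widehat{P})x_P\bigr)^{\chi(1)^2}$ by specializing the Frobenius factorization (you re-derive it from the isotypic decomposition of the regular representation, the paper cites Frobenius and applies the substitution homomorphism), both reduce the biconditional to counting equivalence classes of the relation $\omega_\chi(\widehat{P})=\omega_{\chi'}(\widehat{P})$, and both conclude via Hendrickson's correspondence. Your dimension argument identifying $\mathrm{span}\{\widehat{P}\}$ with the span of the idempotents $F_{X_j}$ exactly when $\ell=m$ is the paper's Lemma 3.3, phrased through the classification of unital subalgebras of $\mathbb{C}^{\mathrm{Irr}(G)}$ (note only that the relation $\sim$ is the \emph{coarsest}, not finest, one on whose blocks the image of $\mathcal{A}$ is constant --- this does not affect the argument).
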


Finally we show that the linear factors appearing in the factorization of Theorem~\ref{thmA} are coming from the regular representation of the abelian algebra $\mathsf{S}$.

\begin{introthm}\label{regular}
Let $\mathsf{S}\subseteq Z(\mathbb{C}{G})$ be a supercharacter theory of $G$. Write $\mathrm{Cl}(\mathsf{S})=\{C_1,\dotsc,C_n\}$, and let $g_i\in C_i$ for each $i$. Let $x_1,\dotsc,x_n$ be indeterminates. If $\mu$ is the regular representation of $\mathsf{S}$, then
\[\det\left(\sum_{k=1}^n\mu\bigl(\widehat{C_k}\bigr)x_k\right)=\prod_{\chi\in\mathrm{BCh}(\mathsf{S})}\left(\sum_{k=1}^n\frac{\chi(g_k)\norm{C_k}}{\chi(1)}\;x_k\right).\]
\end{introthm}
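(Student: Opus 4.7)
The plan is to reduce the determinant on the left to a product of eigenvalues of the commutative, semisimple algebra $\mathsf{S}$ acting on itself by left multiplication. The first step is to identify the primitive idempotents of $\mathsf{S}$. For each basic supercharacter $\chi\in\mathrm{BCh}(\mathsf{S})$ with set of irreducible constituents $X_\chi\subseteq\mathrm{Irr}(G)$, I would show that $e_\chi:=\sum_{\psi\in X_\chi}e_\psi$ (where $e_\psi$ is the primitive central idempotent of $\mathbb{C}G$ affording $\psi$) lies in $\mathsf{S}$ and is a primitive idempotent there. This is a foundational structural fact about supercharacter theories: the sets $X_\chi$ partition $\mathrm{Irr}(G)$ into the blocks on which the basic supercharacters are supported, and the corresponding block sums are precisely the primitive idempotents of the algebra $\mathsf{S}$. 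A dimension count then gives $\mathsf{S}=\bigoplus_{\chi}\mathbb{C}e_\chi$ with $|\mathrm{BCh}(\mathsf{S})|=n$.

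Let $\lambda_\chi:\mathsf{S}\to\mathbb{C}$ be the algebra homomorphism determined by $a\cdot e_\chi=\lambda_\chi(a)e_\chi$. The regular representation $\mu$ then decomposes over the one-dimensional invariant subspaces $\mathbb{C}e_\chi$, so
\[\det\left(\sum_{k=1}^n\mu\bigl(\widehat{C_k}\bigr)x_k\right)=\prod_{\chi\in\mathrm{BCh}(\mathsf{S})}\left(\sum_{k=1}^n\lambda_\chi\bigl(\widehat{C_k}\bigr)x_k\right),\]
and the theorem reduces to identifying $\lambda_\chi(\widehat{C_k})$ with $\chi(g_k)|C_k|/\chi(1)$.

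To carry that out, I would fix $\chi$ and $k$ and use that $\widehat{C_k}\in Z(\mathbb{C}G)$. By Schur's lemma, $\widehat{C_k}$ acts on the irreducible representation affording any $\psi\in X_\chi$ as the scalar $\omega_\psi(\widehat{C_k})=\psi(1)^{-1}\sum_{g\in C_k}\psi(g)$, so $\widehat{C_k}\cdot e_\psi=\omega_\psi(\widehat{C_k})e_\psi$. Comparing with $\widehat{C_k}\cdot e_\chi=\lambda_\chi(\widehat{C_k})\sum_{\psi\in X_\chi}e_\psi$, linear independence of the $e_\psi$ forces all $\omega_\psi(\widehat{C_k})$ with $\psi\in X_\chi$ to equal a common value $\omega_k=\lambda_\chi(\widehat{C_k})$. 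Writing $\chi=\sum_{\psi\in X_\chi}c_\psi\psi$ and summing the relation $\sum_{g\in C_k}\psi(g)=\omega_k\psi(1)$ against the coefficients $c_\psi$ yields $\sum_{g\in C_k}\chi(g)=\omega_k\chi(1)$; since $\chi$ is constant on the $\mathsf{S}$-class $C_k$, the left side equals $|C_k|\chi(g_k)$, giving $\omega_k=\chi(g_k)|C_k|/\chi(1)$, as required.

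The main obstacle is the first step, namely the identification of the primitive idempotents of $\mathsf{S}$ with the block sums $e_\chi$. This is essentially the content of the Hadamard-product axiom together with the standard correspondence between basic supercharacters and their constituent sets, and should be available from the machinery of Section~\ref{supersection}; once it is in hand, the rest is a transparent application of Schur's lemma and the definition of a basic $\mathsf{S}$-character.
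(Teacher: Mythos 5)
Your proof is correct, but it takes a genuinely different route from the paper's. The paper stays close to Frobenius' original machinery: it writes out the structure constants $a_{ijk}$ of $\mathsf{S}$ in the superclass-sum basis, computes them explicitly via a column-orthogonality relation for supercharacters (Theorem~\ref{column}, imported from \cite{SB18nil}, through Corollary~\ref{constants}), verifies the quadratic relations $r_j^{(s)}r_k^{(s)}=\sum_i a_{ijk}r_i^{(s)}$ for the candidate roots $r_k^{(s)}=\chi_s(g_k)\norm{C_k}/\chi_s(1)$, and then invokes Frobenius' classical factorization theorem for commuting hypercomplex systems (Theorem~\ref{regfrob}). You instead diagonalize directly: you pass to the idempotent basis $\{E_\chi\}$ of $\mathsf{S}$ (which Section~\ref{supersection} already guarantees exists, so your ``main obstacle'' is not actually an obstacle in the context of this paper), observe that the determinant of the regular representation is basis-independent and hence equals the product of the eigenvalue functionals $\lambda_\chi$, and then identify $\lambda_\chi(\widehat{C_k})$ with $\chi(g_k)\norm{C_k}/\chi(1)$ by the central-character computation --- which is essentially a re-derivation of the paper's Lemma~\ref{omega}. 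Your argument is shorter and more self-contained: it needs neither the supercharacter column orthogonality nor Frobenius' Theorem~\ref{regfrob}, and it makes transparent that the linear factors are just the algebra homomorphisms $\Omega_\chi$ of $\mathsf{S}$ evaluated on $\sum_k\widehat{C_k}x_k$. What the paper's route buys is fidelity to the historical theme --- exhibiting the result as a direct descendant of Frobenius' own factorization theorem --- and, as a by-product, the explicit formula for the structure constants $a_{ijk}$ in Corollary~\ref{constants}, which has independent interest.
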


\section{Supercharacter Theory}\label{supersection}
In this section, we review the basic principles of supercharacter theory and discuss their algebraic structure. The group $G$ will always represent a finite group. For an element $g\in G$, we let $\mathrm{cl}_G(g)$ denote the $G$-conjugacy class of $g$. Given a character $\chi$ of $G$, we let $\mathrm{Irr}(\chi)$ denote the set of irreducible constituents of $\chi$.

Supercharacter theory was born from attempts to find a \enquote{useful} approximation of the complex character theory of the Sylow $p$-subgroups of finite general linear groups over a field of characteristic $p$ (e.g., see \cite{CA95,NY01}). In \cite{ID07}, Diaconis and Isaacs formalized these efforts to describe an approximation of the complex character theory for all algebra groups.
They also proved many fundamental results and laid the groundwork for the subject of supercharacter theory. 

A supercharacter theory, as defined in \cite{ID07}, is a pair $(\mathcal{X},\mathcal{K})$, where $\mathcal{X}$ is a partition of $\mathrm{Irr}(G)$ and $\mathcal{K}$ is partition of $G$ satisfying: (1) $\{1\}\in\mathcal{K}$; (2) $\norm{\mathcal{X}}=\norm{\mathcal{K}}$; (3) for every $X\in\mathcal{X}$, there exists a character $\xi_X$ with $\mathrm{Irr}(\xi_X)\subseteq X$ that is constant on the parts of $\mathcal{K}$. The parts of $\mathcal{K}$ are referred to as superclasses, and the characters $\xi_X$ are called supercharacters. It is shown that $\xi_X$ is the character $\sigma_X=\sum_{\psi\in X}\psi(1)$ up to a scalar,  the principal character appears in its own part of $\mathcal{X}$, and also that each part of $\mathcal{K}$ is a union of conjugacy classes of $G$. We call such a part $G$-invariant. Other notable structural results include: Every irreducible character is a constituent of some supercharacter;  The partitions $\mathcal{X}$ and $\mathcal{K}$ uniquely determine each other. 

Another important observation about supercharacter theories appearing in \cite{ID07} is that the subspace $\mathcal{A}=\mathbb{C}\text{-}\mathrm{span}\{\widehat{K}:K\in\mathcal{K}\}\subseteq Z(\mathbb{C}{G})$, where $\widehat{K}=\sum_{g\in K}g$, is closed under the ordinary product. Since the parts of $\mathcal{K}$ are disjoint, it is also closed under the Hadamard product and so $\mathcal{A}$ is a unital subalgebra with respect to both products. This shows that a supercharacter theory gives rise to a central {\it Schur ring}, a fact first observed by Hendrickson in \cite{AH12}. The reverse is also true; Proposition 2.4 of \cite{AH12} establishes that the map $(\mathcal{X},\mathcal{K})\mapsto\mathbb{C}\text{-}\mathrm{span}\{\widehat{K}:K\in\mathcal{K}\}$ is a bijection from the set of supercharacter theories of a group $G$ to the set of central Schur rings over $G$.

With these considerations, we define a supercharacter theory to be a subspace $\mathsf{S}$ of $Z(\mathbb{C}{G})$ that is a unital subalgebra with respect to both the ordinary and Hadamard products. Such an algebra is semisimple with respect to both products, and hence has a basis of orthogonal idempotents with respect to each product. Since the conjugacy class sums give a basis of orthogonal idempotents of $Z(\mathbb{C}{G})$ with respect to the Hadamard product, one may obtain the superclasses as a basis of orthogonal idempotents of $\mathsf{S}$ with respect to the Hadamard product, as discussed in \cite{AH12}. Similarly, elements $e_\chi$ for $\chi\in\mathrm{Irr}(G)$ defined by the formula $e_\chi=\sum_{g\in G}\chi(1)\chi(g^{-1})g/\norm{G}$, give a basis of orthogonal idempotents of $Z(\mathbb{C}{G})$ with respect to the ordinary product and thus one may obtain a partition $\mathrm{Ch}(\mathsf{S})$ of $\mathrm{Irr}(G)$ such that the set $\{\sum_{\psi\in X}e_\psi:X\in\mathrm{Ch}(\mathsf{S})\}$ is a basis of orthogonal idempotents of $\mathsf{S}$ with respect to the ordinary product. We denote by $\mathrm{Cl}(\mathsf{S})$ the set of superclasses for the supercharacter theory $\mathsf{S}$ and call its elements $\mathsf{S}$-classes. For an element $g\in G$, we let $\mathrm{cl}_{\mathsf{S}}(g)$ denote the $\mathsf{S}$-class containing $g$. We define a set $\mathrm{BCh}(\mathsf{S})$ of supercharacters by $\mathrm{BCh}(\mathsf{S})=\{\sigma_X:X\in\mathrm{Ch}(\mathsf{S})\}$ and call its elements the basic $\mathsf{S}$-characters. 

\section{Group determinants and superclasses}
In this section, we prove Theorem~\ref{thmA} and Theorem~\ref{thmB}. We begin with an alternative statement of Theorem~\ref{frobenius} (2), rewritten using modern language. It is exactly this result that Theorem~\ref{thmA} is meant to generalize.
\begin{thm}[Frobenius]\label{frob}
Let $C\subseteq G$ denote a set of conjugacy class representatives for $G$, and let $x_g$ be independent indeterminates. Let $\kappa:G\to C$ be the function that sends an element $g\in G$ to its conjugacy class representative in $C$. Let $\Theta^\ast$ be the determinant $\norm{x^{}_{\kappa(gh^{-1})}}$. Then
\[\Theta^\ast=\prod_{\psi\in\mathrm{Irr}(G)}\left(\sum_{g\in C}\frac{\psi(g)\norm{\mathrm{cl}_G(g)}}{\psi(1)}\;x_g\right)^{\xstrut\psi(1)^2}.
\]
\end{thm}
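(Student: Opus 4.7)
The plan is to interpret $\Theta^\ast$ as the determinant of a central element of $\mathbb{C}G$ acting via the regular representation, and then block-diagonalize using the Wedderburn decomposition. First I would enumerate the conjugacy classes as $C_1,\dotsc,C_n$ with chosen representatives $g_i \in C_i \cap C$, and observe that if $\rho$ denotes the left-regular representation of $G$, then with respect to the standard basis $\{h : h \in G\}$ of $\mathbb{C}G$ the matrix $\sum_{g'\in G} x_{\kappa(g')}\rho(g')$ has $(a,b)$-entry $x_{\kappa(ab^{-1})}$. Collecting the terms with equal values of $\kappa$ gives
\[\Theta^\ast \;=\; \det\!\Bigl(\sum_{i=1}^n x_{g_i}\,\rho(\widehat{C_i})\Bigr).\]

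Next I would decompose $\mathbb{C}G = \bigoplus_{\psi \in \mathrm{Irr}(G)} U_\psi$ into $\psi$-isotypic components; each $U_\psi$ has dimension $\psi(1)^2$, since $\psi$ appears in the regular representation with multiplicity $\psi(1)$. Because each class sum $\widehat{C_i}$ lies in $Z(\mathbb{C}G)$, Schur's lemma forces it to act on any irreducible module affording $\psi$ as the scalar $\omega_\psi(\widehat{C_i}) = \norm{C_i}\psi(g_i)/\psi(1)$. Hence on the whole of $U_\psi$ the operator $\sum_i x_{g_i}\rho(\widehat{C_i})$ acts as scalar multiplication by
\[\lambda_\psi \;=\; \sum_{i=1}^n \frac{\norm{C_i}\,\psi(g_i)}{\psi(1)}\,x_{g_i} \;=\; \sum_{g \in C} \frac{\norm{\mathrm{cl}_G(g)}\,\psi(g)}{\psi(1)}\,x_g.\]

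The determinant of scalar multiplication by $\lambda_\psi$ on $U_\psi$ is $\lambda_\psi^{\psi(1)^2}$, so the block-diagonal decomposition of $\mathbb{C}G$ yields
\[\Theta^\ast \;=\; \prod_{\psi \in \mathrm{Irr}(G)} \lambda_\psi^{\psi(1)^2},\]
which is the asserted identity. The one conceptually substantive step is the initial identification of the matrix $(x_{\kappa(gh^{-1})})_{g,h\in G}$ with a representing matrix of $\sum_i x_{g_i}\,\widehat{C_i}\in Z(\mathbb{C}G)$ under $\rho$; the remainder relies only on the Wedderburn decomposition of $\mathbb{C}G$ and the central-character formula, both entirely standard. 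I would not expect a genuine obstacle here, only careful bookkeeping to align the representatives in $C$ and the variables $x_g$ with the isotypic decomposition.
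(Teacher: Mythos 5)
Your proof is correct. The paper does not actually prove Theorem~\ref{frob} --- it is quoted as Frobenius's classical result and used as a black box in the proof of Theorem~\ref{thmA} --- but your argument (identify the group matrix with $\sum_i x_{g_i}\rho(\widehat{C_i})$ for a central element, then let each class sum act by the scalar $\omega_\psi(\widehat{C_i})=\lvert C_i\rvert\psi(g_i)/\psi(1)$ on the $\psi$-isotypic component of dimension $\psi(1)^2$) is the standard modern proof and coincides with the perspective the paper itself sketches in the introduction, where $\Theta$ is written as $\det\bigl(\sum_{g\in G}\rho(g)x_g\bigr)$.
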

Let $a\in Z(\mathbb{C}{G})$. Since $\{e_\psi:\psi\in\mathrm{Irr}(G)\}$ forms a basis of $Z(\mathbb{C}{G})$, where $e_\psi=\sum_{g\in G}\psi(1)\psi(g^{-1})g/\norm{G}$ is the primitive central idempotent associated to $\psi$, one may express
\[a=\sum_{\psi\in\mathrm{Irr}(G)}\omega_\psi(a)e_\psi.\]
It is shown in Chapter 3 of \cite{MI76} that the functions $\omega_\psi:Z(\mathbb{C}G)\to\mathbb{C}$ are algebra homomorphisms, and can be defined on the basis of conjugacy class sums by
$\omega_\psi(\widehat{K})=\norm{K}\psi(g_K)/\psi(1)$, where $K$ is a conjugacy class of $G$ and $g_K\in K$. Using these functions, one may have written the result of Theorem~\ref{frob} as
\[\Theta^\ast=\prod_{\psi\in\mathrm{Irr}(G)}\left(\sum_{g\in C}\omega_\psi\left(\widehat{\mathrm{cl}_G(g)}\right)x_g\right)^{\xstrut\psi(1)^2}.\] 

In order to obtain a supercharacter theoretic version of Theorem~\ref{frob}, we need the following result, which explicitly gives the values that the $\omega$ functions take on superclass sums.
\begin{lem}\label{omega}
Let $\mathsf{S}$ be a supercharacter theory of $G$, let $K$ be an $\mathsf{S}$-class, and let $g\in K$. For each $\chi\in\mathrm{BCh}(\mathsf{S})$ and for every $\psi\in\mathrm{Irr}(\chi)$, we have
\[\omega_\psi(\widehat{K})=\frac{\norm{K}\chi(g)}{\chi(1)}.\]
\end{lem}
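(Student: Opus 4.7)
The plan is to combine two observations: first, that $\omega_\psi(\widehat{K})$ depends only on the part $X \in \mathrm{Ch}(\mathsf{S})$ containing $\psi$; second, that the usual formula $\omega_\psi(\widehat{L}) = \norm{L}\psi(g)/\psi(1)$ for conjugacy class sums $L$ can be summed over the constituents of $\chi = \sigma_X$ to yield the claimed identity.

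For the first step, I would use that $\widehat{K} \in \mathsf{S}$ and that the primitive idempotents of $\mathsf{S}$ with respect to the ordinary product are precisely the sums $f_X = \sum_{\psi \in X} e_\psi$ for $X \in \mathrm{Ch}(\mathsf{S})$, as reviewed in Section~\ref{supersection}. Expanding $\widehat{K} = \sum_X \lambda_X f_X$ in this basis and then re-expanding each $f_X$ in the basis $\{e_\psi\}$ of $Z(\mathbb{C}G)$ gives $\widehat{K} = \sum_\psi \lambda_{X(\psi)}\,e_\psi$, where $X(\psi)$ denotes the part of $\mathrm{Ch}(\mathsf{S})$ containing $\psi$. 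Since $\omega_\psi(\widehat{K})$ is precisely the coefficient of $e_\psi$, this shows that $\omega_\psi(\widehat{K}) = \omega_{\psi'}(\widehat{K})$ whenever $\psi,\psi' \in X$, a common value I will denote $\omega_X(\widehat{K})$.

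For the second step, since $K$ is $G$-invariant, it decomposes as a disjoint union of $G$-conjugacy classes $L_1,\dotsc,L_r$ with $\widehat{K} = \sum_i \widehat{L_i}$. Applying the standard formula on each summand and using that $\psi$ is a class function yields
\[\omega_\psi(\widehat{K}) = \frac{1}{\psi(1)} \sum_{h \in K} \psi(h).\]
Multiplying by $\psi(1)^2$ and summing over all $\psi \in X = \mathrm{Irr}(\chi)$, the left side becomes $\omega_X(\widehat{K}) \cdot \sum_{\psi \in X} \psi(1)^2 = \omega_X(\widehat{K})\,\chi(1)$ (using that $\chi(1) = \sigma_X(1) = \sum_{\psi \in X}\psi(1)^2$), while the right side becomes $\sum_{h \in K} \chi(h)$. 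Since $\chi$ is constant on $\mathsf{S}$-classes and $g \in K$, this last sum equals $\norm{K}\chi(g)$. Solving gives $\omega_\psi(\widehat{K}) = \omega_X(\widehat{K}) = \norm{K}\chi(g)/\chi(1)$, as required.

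The main obstacle is the first step: one needs to know that the constancy of $\omega_\psi(\widehat{K})$ across $\psi \in X$ follows from $\widehat{K}$ lying in $\mathsf{S}$ together with the structure of $\mathsf{S}$ as an algebra under the ordinary product. Once that is in place, the second step is a short averaging argument.
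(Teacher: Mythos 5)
Your proof is correct, and its first half is exactly the paper's argument: both establish that $\omega_\psi(\widehat{K})$ is constant for $\psi$ ranging over a part $X\in\mathrm{Ch}(\mathsf{S})$ by expanding $\widehat{K}\in\mathsf{S}$ in the basis of ordinary-product idempotents $\sum_{\psi\in X}e_\psi$ and re-reading the coefficients in the $\{e_\psi\}$ basis of $Z(\mathbb{C}G)$. Where you diverge is in the second half: the paper simply cites the proof of Theorem 2.4 of Diaconis--Isaacs for the value $\norm{K}\chi(g)/\chi(1)$, whereas you derive it directly by writing $\omega_\psi(\widehat{K})=\frac{1}{\psi(1)}\sum_{h\in K}\psi(h)$ (summing the classical formula over the conjugacy classes comprising $K$), then multiplying by $\psi(1)^2$ and summing over $\psi\in X$ so that $\sigma_X=\sum_{\psi\in X}\psi(1)\psi$ appears on the right. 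This averaging computation is sound and makes the lemma self-contained modulo one input you should flag explicitly: the final step $\sum_{h\in K}\chi(h)=\norm{K}\chi(g)$ uses that the supercharacter $\chi=\sigma_X$ is constant on the $\mathsf{S}$-class $K$, which is itself a foundational fact from \cite{ID07} reviewed in Section~\ref{supersection}. So your argument trades the paper's citation for a short explicit computation plus a more elementary citation; that is a reasonable exchange, and arguably preferable since it exhibits where the quantity $\norm{K}\chi(g)/\chi(1)$ actually comes from.
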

\begin{proof}
Since $\widehat{K}\in Z(\mathbb{C}G)$, we have
\[\widehat{K}=\sum_{\psi\in\mathrm{Irr}(G)}\omega_\psi(\widehat{K})e_\psi,\]
where $e_\psi=\sum_{g\in G}\psi(1)\psi(g^{-1})g/\norm{G}$ is the primitive central idempotent associated to $\psi$. The space $\mathbb{C}$-$\mathrm{span}\{\widehat{L}:L\in\mathrm{Cl}(\mathsf{S})\}$ has the set $\{E_\chi:\chi\in\mathrm{BCh}(\mathsf{S})\}$ as a basis, where $E_\chi=\sum_{\psi\in\mathrm{Irr}(\chi)}e_\psi$; it follows that $\omega_\psi(\widehat{K})$ is constant as $\psi$ ranges over $\mathrm{Irr}(\chi)$ for each $\chi\in\mathrm{BCh}(\mathsf{S})$. In the proof of Theorem 2.4 of \cite{ID07}, it is shown that this number is exactly $\norm{K}\chi(g)/\chi(1)$. In fact, this is exactly the coefficient of $E_\chi$ when expressing $\widehat{K}$ in terms of the $\{E_\chi:\chi\in\mathrm{BCh}(\mathsf{S})\}$ basis.
\end{proof}

Given the basic $\mathsf{S}$-characters of a supercharacter theory $\mathsf{S}$ of $G$, one may recover the $\mathsf{S}$-classes rather easily: two elements $g$ and $h$ of $G$ lie in the same $\mathsf{S}$-class if and only if $\chi(g)=\chi(h)$ for every $\chi\in\mathrm{BCh}(\mathsf{S})$. In light of Lemma~\ref{omega}, we obtain a similar procedure for determining $\mathrm{BCh}(\mathsf{S})$ from $\mathrm{Cl}(\mathsf{S})$.
\begin{lem}\label{omegasct}
Let $\mathcal{P}$ be a partition of $G$ into $G$-invariant parts, and assume that $\{1\}\in\mathcal{P}$. For each $g\in G$, let $K_g$ denote the part of $\mathcal{P}$ containing $g$. Define an equivalence relation $\sim$ on $\mathrm{Irr}(G)$ by defining $\chi\sim\psi$ if and only if $\omega_\chi(\widehat{K_g})=\omega_\psi(\widehat{K_g})$ for all $g\in C$. The partition $\mathcal{P}$ gives the superclasses for a supercharacter theory of $G$ if and only if $\mathrm{Irr}(G)$ has exactly $\norm{\mathcal{P}}$ equivalence classes under $\sim$. In this event, if $\mathcal{X}$ denotes the set of equivalence classes of $\mathrm{Irr}(G)$ under $\sim$, then we have $\mathrm{BCh}(\mathsf{S})=\{\sigma_X:X \in\mathcal{X}\}$.
\end{lem}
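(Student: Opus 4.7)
The approach is to translate the problem across the classical algebra isomorphism $\Omega\colon Z(\mathbb{C}G)\to\mathbb{C}^{\mathrm{Irr}(G)}$ given by $a\mapsto(\omega_\psi(a))_{\psi\in\mathrm{Irr}(G)}$, which converts the ordinary product on $Z(\mathbb{C}G)$ into coordinate-wise multiplication. Let $\mathcal{A}=\mathbb{C}\text{-}\mathrm{span}\{\widehat{K}:K\in\mathcal{P}\}$. Because the parts of $\mathcal{P}$ are $G$-invariant, $\mathcal{A}\subseteq Z(\mathbb{C}G)$; because those parts are pairwise disjoint and $\{1\}\in\mathcal{P}$, $\mathcal{A}$ is automatically a unital subalgebra under the Hadamard product. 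Thus $\mathcal{P}$ gives the superclasses of a supercharacter theory if and only if $\mathcal{A}$ is closed under the ordinary product, which via $\Omega$ is equivalent to $\Omega(\mathcal{A})$ being a unital subalgebra of $\mathbb{C}^{\mathrm{Irr}(G)}$.

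The main structural fact I would invoke is the elementary correspondence between unital subalgebras of $\mathbb{C}^{\mathrm{Irr}(G)}$ and partitions of $\mathrm{Irr}(G)$: a partition $\mathcal{X}$ corresponds to the subalgebra $V_{\mathcal{X}}$ consisting of tuples that are constant on each part of $\mathcal{X}$, and $\dim V_{\mathcal{X}}=|\mathcal{X}|$. By construction, the relation $\sim$ is precisely the coarsest equivalence relation on $\mathrm{Irr}(G)$ under which every element of $\Omega(\mathcal{A})$ is constant on equivalence classes (noting that varying $g$ over $C$ is the same as varying $K_g$ over $\mathcal{P}$, and hence the same as varying $\Omega(\widehat{K_g})$ over a basis of $\Omega(\mathcal{A})$). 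So if $\mathcal{X}$ denotes the partition of $\mathrm{Irr}(G)$ into $\sim$-classes, then $V_{\mathcal{X}}$ is the smallest unital subalgebra of $\mathbb{C}^{\mathrm{Irr}(G)}$ containing $\Omega(\mathcal{A})$.

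Now I would finish with a dimension count. Since $\Omega$ is injective and the class sums $\widehat{K}$ for $K\in\mathcal{P}$ are linearly independent, $\dim\Omega(\mathcal{A})=|\mathcal{P}|$; and $\Omega(\mathcal{A})\subseteq V_{\mathcal{X}}$ forces $|\mathcal{P}|\le|\mathcal{X}|$. If equality holds, then $\Omega(\mathcal{A})=V_{\mathcal{X}}$ is a subalgebra, so $\mathcal{A}$ is a supercharacter theory. Conversely, if $\Omega(\mathcal{A})$ is itself a unital subalgebra, it must equal the smallest subalgebra containing it, namely $V_{\mathcal{X}}$, forcing $|\mathcal{P}|=|\mathcal{X}|$. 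This establishes the biconditional.

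For the concluding claim, suppose $\mathcal{P}$ gives the superclasses of $\mathsf{S}$. The primitive idempotents of $V_{\mathcal{X}}$ are the indicator tuples of the parts of $\mathcal{X}$, and their $\Omega$-preimages are exactly the elements $E_X=\sum_{\psi\in X}e_\psi$ for $X\in\mathcal{X}$; these are therefore the primitive idempotents of $\mathcal{A}$ under the ordinary product. By the definition of $\mathrm{BCh}(\mathsf{S})$ recalled in Section~\ref{supersection}, this yields $\mathrm{Ch}(\mathsf{S})=\mathcal{X}$ and $\mathrm{BCh}(\mathsf{S})=\{\sigma_X:X\in\mathcal{X}\}$. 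I expect the dimension argument to be routine; the only step needing a little care is the correspondence between unital subalgebras of $\mathbb{C}^n$ and set partitions of $[n]$, which is where the real content of the lemma is concentrated.
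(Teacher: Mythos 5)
Your proposal is correct and follows essentially the same route as the paper: both arguments expand the sums $\widehat{K}$ over the idempotents $e_\psi$, observe that the coefficients $\omega_\psi(\widehat{K})$ are constant on $\sim$-classes so that $\mathcal{A}$ lies in the span of the idempotents $F_X=\sum_{\psi\in X}e_\psi$, and then use the dimension count $\norm{\mathcal{P}}=\dim\mathcal{A}\le\norm{\mathcal{X}}$ to characterize when $\{F_X\}$ is a basis of ordinary-product idempotents for $\mathcal{A}$. The only cosmetic difference is that you make the subalgebra--partition correspondence for $\mathbb{C}^{\mathrm{Irr}(G)}$ explicit where the paper instead invokes Lemma~\ref{omega} for one direction and cites \cite[Proposition 2.4]{AH12} to conclude.
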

\begin{proof}
The one direction is covered in Lemma~\ref{omega}. So assume that $\mathcal{P}$ is a partition of $G$ into $G$-invariant parts satisfying $\{1\}\in\mathcal{P}$. Write $\mathsf{S}=\mathbb{C}\text{-}\mathrm{span}\{\widehat{K}:K\in\mathcal{P}\}$. Let $\mathcal{X}$ denote the set of equivalence classes of $\mathrm{Irr}(G)$ under the equivalence relation $\sim$, and assume that $\norm{\mathcal{X}}=\norm{\mathcal{P}}$. It follows that for every $K\in\mathcal{P}$, there exist coefficients $\Omega_X(\widehat{K})$ so that $\widehat{K}=\sum_{X\in\mathcal{X}}\Omega_X(\widehat{K})F_X$, where $F_X=\sum_{\psi\in X}e_\psi$. Therefore, $\{F_X:X\in\mathcal{X}\}$ is a basis of idempotents for $\mathsf{S}$ with respect to the ordinary product. The result follows by \cite[Proposition 2.4]{AH12}.
\end{proof}
We now prove Theorem~\ref{thmA}, which we restate here.
\setcounter{intro}{0}
\begin{introthm}\label{sfrob1}
Let $\mathsf{S}$ be a supercharacter theory of the group $G$. Let $C\subseteq G$ denote a set of $\mathsf{S}$-class representatives for $G$, and let $x_g$ be independent indeterminates indexed by the elements of $C$. Let $\kappa:G\to C$ be the function which sends an element $g\in G$ to its $\mathsf{S}$-class representative lying in $C$. Let $\Theta_{\mathsf{S}}$ be the determinant $\norm{x^{}_{\kappa(gh^{-1})}}$. Then
\[\Theta_{\mathsf{S}}=\prod_{\chi\in\mathrm{BCh}(\mathsf{S})}\left(\sum_{g\in C}\frac{\chi(g)\norm{\mathrm{cl}_{\mathsf{S}}(g)}}{\chi(1)}\;x_g\right)^{\xstrut\chi(1)}.\]
\end{introthm}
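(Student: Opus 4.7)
The plan is to derive Theorem \ref{sfrob1} by a specialization of Frobenius' factorization from Theorem \ref{frob}. The key observation is that $\Theta_{\mathsf{S}}$ is obtained from $\Theta^\ast$ by the substitution sending each conjugacy class indeterminate $x_c$ to $x_{\kappa(c)}$: each entry $x_{\kappa^\ast(gh^{-1})}$ of the matrix defining $\Theta^\ast$ specializes to $x_{\kappa(\kappa^\ast(gh^{-1}))}=x_{\kappa(gh^{-1})}$, because every $\mathsf{S}$-class is a union of conjugacy classes, so $\kappa^\ast(gh^{-1})$ lies in the same superclass as $gh^{-1}$. Since Theorem \ref{frob} is a polynomial identity, applying this substitution termwise to its right-hand side reduces the problem to evaluating and regrouping the resulting linear factors.

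For a fixed $\psi\in\mathrm{Irr}(G)$, I would group the contributions to the specialized $\psi$-linear factor according to the $\mathsf{S}$-class containing each conjugacy class. For a superclass $K$ with representative $k=\kappa(K)\in C$, the classes making up $K$ contribute
\[\frac{1}{\psi(1)}\sum_{g\in K}\psi(g)\,x_k=\omega_\psi\bigl(\widehat{K}\bigr)\,x_k,\]
using linearity of $\omega_\psi$ together with its known values on conjugacy class sums. By Lemma \ref{omega}, $\omega_\psi(\widehat{K})=\norm{K}\chi(k)/\chi(1)$, where $\chi\in\mathrm{BCh}(\mathsf{S})$ is the unique basic $\mathsf{S}$-character with $\psi\in\mathrm{Irr}(\chi)$. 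The essential point is that this value depends only on $\chi$, not on $\psi$, so every $\psi\in\mathrm{Irr}(\chi)$ produces the same linear factor after specialization, namely $\sum_{g\in C}\chi(g)\norm{\mathrm{cl}_{\mathsf{S}}(g)}x_g/\chi(1)$.

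It remains to combine equal factors. For each $\chi\in\mathrm{BCh}(\mathsf{S})$, the combined exponent is $\sum_{\psi\in\mathrm{Irr}(\chi)}\psi(1)^2$. Since $\chi=\sigma_X=\sum_{\psi\in X}\psi(1)\psi$ with $X=\mathrm{Irr}(\chi)$, this sum equals $\sigma_X(1)=\chi(1)$, matching the target exponent. Combined with the identification of the linear factors from the previous step, this yields the claimed factorization.

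I expect the main obstacle to be articulating cleanly that the substitution from $\Theta^\ast$ to $\Theta_{\mathsf{S}}$ is legitimate --- that is, viewing $\Theta^\ast$ as a genuine polynomial identity in the larger ring of conjugacy class indeterminates so that termwise specialization preserves the factorization. Once this is set up, the rest is essentially bookkeeping powered by Lemma \ref{omega} and the defining relation $\sigma_X(1)=\sum_{\psi\in X}\psi(1)^2$, with the count of distinct linear factors on the right forced to be $\norm{\mathrm{BCh}(\mathsf{S})}$ precisely because the values of $\omega_\psi$ on superclass sums separate the equivalence classes $\mathrm{Ch}(\mathsf{S})$.
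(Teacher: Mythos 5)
Your proposal is correct and follows essentially the same route as the paper: specialize Frobenius' conjugacy-class factorization (Theorem~\ref{frob}) via the substitution $x_c\mapsto x_{\kappa(c)}$, use linearity of $\omega_\psi$ to collapse the conjugacy classes inside each superclass into $\omega_\psi\bigl(\widehat{K}\bigr)$, invoke Lemma~\ref{omega} to see that the resulting linear factor depends only on the basic $\mathsf{S}$-character $\chi$ with $\psi\in\mathrm{Irr}(\chi)$, and combine exponents via $\sum_{\psi\in\mathrm{Irr}(\chi)}\psi(1)^2=\chi(1)$. The only cosmetic difference is that the paper leaves the legitimacy of the substitution implicit here (making it explicit only later, in the proof of Theorem~\ref{sfrob2} via the homomorphism $\mathscr{F}$), whereas you flag it as the point needing care.
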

\begin{proof}
Since $\omega_\psi$ is an algebra homomorphism $Z(\mathbb{C}(G))\to\mathbb{C}$, it follows from Theorem~\ref{frob} that
\begin{align*}
\Theta_{\mathsf{S}}=\prod_{\psi\in\mathrm{Irr}(G)}\left(\sum_{g\in C}\omega_\psi\left(\widehat{\mathrm{cl}_{\mathsf{S}}(g)}\right)x_g\right)^{\xstrut\psi(1)^2}.
\end{align*}
Also, we have that $\omega_\psi(\widehat{K})=\omega_\chi(\widehat{K})$ for each $K\in\mathrm{Cl}(\mathsf{S})$ whenever $\psi\in\mathrm{Irr}(\chi)$ by Lemma~\ref{omega}. 
So
\begin{align*}\Theta_{\mathsf{S}}&=\prod_{\psi\in\mathrm{Irr}(G)}\left(\sum_{g\in C}\omega_\psi\left(\widehat{\mathrm{cl}_{\mathsf{S}}(g)}\right)x_g\right)^{\xstrut\psi(1)^2}\\
&=\prod_{\chi\in\mathrm{BCh}(\mathsf{S})}\prod_{\psi\in\mathrm{Irr}(\chi)}\left(\sum_{g\in C}\omega_\psi\left(\widehat{\mathrm{cl}_{\mathsf{S}}(g)}\right)x_g\right)^{\xstrut\psi(1)^2}\\
&=\prod_{\chi\in\mathrm{BCh}(\mathsf{S})}\prod_{\psi\in\mathrm{Irr}(\chi)}\left(\sum_{g\in C}\omega_\psi\left(\widehat{\mathrm{cl}_{\mathsf{S}}(g)}\right)x_g\right)^{\xstrut\psi(1)^2}\\
&=\prod_{\chi\in\mathrm{BCh}(\mathsf{S})}\left(\sum_{g\in C}\omega_\psi\left(\widehat{\mathrm{cl}_{\mathsf{S}}(g)}\right)x_g\right)^{\:\:\:\:\sum\limits_{\mathclap{\psi\in\mathrm{Irr}(\chi)}}\psi(1)^2}\\
&=\prod_{\chi\in\mathrm{BCh}(\mathsf{S})}\left(\sum_{g\in C}\frac{\chi(g)\norm{\mathrm{cl}_{\mathsf{S}}(g)}}{\chi(1)}\;x_g\right)^{\xstrut\chi(1)},
\end{align*}
as desired.
\end{proof}

Observe that the set $\{\psi(1)\psi:\psi\in\mathrm{Irr}(G)\}$ gives the supercharacters for the supercharacter theory $Z(\mathbb{C}{G})$ of $G$. Specializing Theorem~\ref{thmA} to this supercharacter theory yields the second statement of Theorem~\ref{frobenius}. In particular, Theorem~\ref{thmA} really is a generalization of the second statement of Theorem~\ref{frobenius}.

We now present some examples that illustrate  the conclusion of Theorem~\ref{thmA}. The supercharacter theory $Z(\mathbb{C}{G})$ of $G$ is the supercharacter theory that is contained in every other supercharacter theory of $G$. There is also a supercharacter theory that contains all others. Specifically this is the supercharacter theory with superclasses $\{\{1\},G\setminus\{1\}\}$. The next example is exactly Theorem~\ref{thmA} specialized to this supercharacter theory.
\begin{exam}
Let $G$ be any finite group, and let $\mathsf{S}$ be the supercharacter theory of $G$ with $\mathrm{Cl}(\mathsf{S})=\{\{1\},G\setminus\{1\}\}$. Then $\mathrm{BCh}(\mathsf{S})=\{\mathbbm{1},\rho-\mathbbm{1}\}$, where $\rho$ denotes the regular character of $G$. Let $g$ be a nonidentity element of $G$. We will first compute $\Theta_{\mathsf{S}}$ without appealing to Theorem~\ref{sfrob1}. The group matrix corresponding to $\mathsf{S}$ is the matrix
\[X=\left(\begin{array}{cccc}x_1&x_g&\cdots&x_g\\
x_g&x_1&&\multirow{2}{*}{\vdots}\\
\vdots&&\ddots&\\
x_g&\multicolumn{2}{c}{\cdots}&x_1
\end{array}\right).\]
Let $Y=X+(x_g-x_1)\cdot I$, the matrix with all entries $x_g$. It is not difficult to see that the eigenvalues of $Y$ are 0 with multiplicity $\norm{G}-1$, and $\norm{G}x_g$ with multiplicity 1. Therefore the eigenvalues of $X$ are $x_1-x_g$ with multiplicity $\norm{G}-1$, and $\norm{G}x_g+(x_1-x_g)=x_1+(\norm{G}-1)x_g$ with multiplicity 1. Therefore
\[\Theta_{\mathsf{S}}=\det(X)=(x_1+(\norm{G}-1)x_g)(x_1-x_g)^{\norm{G}-1}.\]
We see that this agrees with Theorem~\ref{sfrob1}. Indeed, the linear factor contributed by the principal character is $x_1+\norm{\mathrm{cl}_{\mathsf{S}}(g)}x_g=x_1+(\norm{G}-1)x_g$, and the term contributed by $\chi=\rho-\mathbbm{1}$ is $\bigl(x_1+\frac{\chi(g)\norm{\mathrm{cl}_{\mathsf{S}}(g)}}{\chi(1)}x_g\bigr)^{\chi(1)}=(x_1-x_g)^{\norm{G}-1}$.
\end{exam}

We now give another example that illustrates the conclusion of Theorem~\ref{thmA}. In this example we consider a supercharacter theory of a cyclic group coming from the inversion automorphism. Perhaps interesting in its own right, we consequently obtain a formula for the determinant of specific type of symmetric Toeplitz matrix.
\begin{exam}
Let $G=\inner{a}$ be a cyclic group of order $n$. Let $\mathsf{S}$ be the supercharacter theory of $G$ coming from the action of the inversion automorphism. 

First let $n$ be odd. Then we have $\mathrm{BCh}(\mathsf{S})=\{\mathbbm{1}\}\cup\{\psi+\overline{\psi}:\psi\ne\mathbbm{1}\}$, and $\mathrm{Cl}(\mathsf{S})=\{\{1\}\}\cup\{\{a^k,a^{-k}\}:1\le k\le (n-1)/2\}$. It follows that $\{1,a,a^2,\dotsc,a^{(n-1)/2}\}$ is a set of representatives of the $\mathsf{S}$-classes. The irreducible characters of $G$ are the characters $\psi_j$ defined by $\psi_j(a)=e^{2\pi i j/n}$ for $0\le j\le n-1$. The value that $\psi_j+\overline{\psi_j}$ takes on the $\mathsf{S}$-class $\{a^k,a^{-k}\}$ is $2\cos(2\pi jk/n)$. The group matrix corresponding to $\mathsf{S}$ has $x_1$ on the diagonal, $x_{a^{k+1}}$ along the $k$-diagonal for $1\le k\le (n-1)/2$, and is symmetric. We have
\[\Theta_{\mathsf{S}}=\Delta\cdot\prod_{j=1}^{(n-1)/2}\left(\,x_1+\sum_{k=1}^{(n-1)/2}2\cos\left(\frac{2\pi k}{n}\right)x_{a^k}\right)^{\xstrut 2},\]
where
\[\Delta=x_1+2x_a+\dotsb+2x_{a^{(n-1)/2}}.\]

Now let $n$ be even. Let $\lambda\in\mathrm{Irr}(G)$ be the character defined by $\lambda(a)=-1$. Then $\mathrm{BCh}(\mathsf{S})=\{\mathbbm{1},\lambda\}\cup\{\psi+\overline{\psi}:\psi\ne\mathbbm{1},\lambda\}$, and $\mathrm{Cl}(\mathsf{S})=\{\{1\},\{a^{n/2}\}\}\cup\{\{a^k,a^{-k}\}:1\le k< n/2\}$. It follows that $\{1,a,a^2,\dotsc,a^{n/2}\}$ is a set of representatives of the $\mathsf{S}$-classes. As before, the irreducible characters of $G$ are the characters $\psi_j$ defined above. Observe that $\lambda=\psi_{n/2}$. Therefore, the value that $\psi_j+\overline{\psi_j}$ takes on the $\mathsf{S}$-class $\{a^k,a^{-k}\}$ is $2\cos(2\pi jk/n)$ if $j\ne n/2$, and is $(-1)^k$ if $j=n/2$. The group matrix corresponding to $\mathsf{S}$ has $x_1$ on the diagonal, $x_{a^{k+1}}$ along the $k$-diagonal for $1\le k\le n/2-1$, and is symmetric. We have
\[\Theta_{\mathsf{S}}=\Lambda\cdot \prod_{j=1}^{n/2-1}\left(\,x_1+(-1)^jx_{a^{n/2}}+\sum_{k=1}^{n/2-1}2\cos\left(\frac{2\pi kj}{n}\right)x_{a^k}\right)^{\xstrut 2},\]
where 
\[\Lambda=\biggl(x_1+x_{a^{n/2}}+2\sum_{k=1}^{n/2-1}x_{a^k}\biggr)\biggl(x_1+x_{a^{n/2}}+2\sum_{k=1}^{n/2-1}(-1)^kx_{a^k}\biggr).\]
\end{exam}

Theorem~\ref{thmA} is one direction of Theorem~\ref{thmB}. The next theorem asserts the other direction, and serves as a somewhat of a converse to Theorem~\ref{sfrob1}.
\begin{thm}\label{sfrob2}
Let $\mathcal{P}$ be a partition of $G$ of length $\ell$ into $G$-invariant parts, and assume that $\{1\}\in\mathcal{P}$. Let $C\subseteq G$ be a set of representatives for the parts of $\mathcal{P}$, and let $x_g$ be independent indeterminates indexed by the elements of $C$.  For each $g\in G$, let $K_g$ denote the part of $\mathcal{P}$ containing $g$. Let $\kappa:G\to C$ be the function that maps an element of $G$ to its representative in $C$.   Let $\Theta_{\mathcal{P}}$ be the determinant $\norm{x^{}_{\kappa(gh^{-1})}}$. Suppose that $\Theta_{\mathcal{P}}$ factors into a product of powers of $\ell$ distinct linear factors, each monic in $x_1$, say 
\[\Theta_{\mathcal{P}}=\prod_{j=1}^\ell\left(\sum_{g\in C}\xi_g^j x_g\right)^{\xstrut m_j}.\] Then $\mathcal{P}$ gives the $\mathsf{S}$-classes for a supercharacter theory $\mathsf{S}$ of $G$ with basic $\mathsf{S}$-characters given by the functions $\chi_j:G\to\mathbb{C}$ defined by
\[\chi_j(g)=\frac{m_j\xi_{\kappa(g)}^j}{\norm{K_g}}.\]
\end{thm}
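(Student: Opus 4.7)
The plan is to use Theorem~\ref{frob}, expressed via the functions $\omega_\psi$, to produce one factorization of $\Theta_\mathcal{P}$ into linear factors, then compare it against the hypothesized factorization using uniqueness of factorization in $\mathbb{C}[x_g : g \in C]$. Once the two factorizations are matched, Lemma~\ref{omegasct} immediately supplies the supercharacter theory, and reading off corresponding factors identifies the basic characters.

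First I would specialize Theorem~\ref{frob}. Since conjugacy classes refine the parts of $\mathcal{P}$, the substitution $x_c \mapsto x_{\kappa(c)}$ (taken over conjugacy class representatives $c$) sends the group matrix in Theorem~\ref{frob} to the group matrix for $\Theta_\mathcal{P}$, hence sends $\Theta^\ast$ to $\Theta_\mathcal{P}$. Using $\omega_\psi(\widehat{K_h}) = \sum_L \omega_\psi(\widehat{L})$ as $L$ ranges over the conjugacy classes contained in $K_h$, collecting coefficients of each $x_h$ yields
\[\Theta_\mathcal{P} = \prod_{\psi \in \mathrm{Irr}(G)} \Bigl(\sum_{h \in C} \omega_\psi(\widehat{K_h})\, x_h\Bigr)^{\psi(1)^2}.\]

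The main obstacle, which requires care, is matching this with the given factorization. Each factor $\sum_{h \in C} \omega_\psi(\widehat{K_h}) x_h$ is monic in $x_1$, because $K_1 = \{1\}$ and $\omega_\psi(1_G) = 1$ (as $\omega_\psi$ is a unital algebra homomorphism), so each such factor is a nonzero irreducible of $\mathbb{C}[x_g : g \in C]$. The hypothesized factorization is likewise into linear irreducibles monic in $x_1$, and the $\ell$ factors on the right are by assumption distinct. Unique factorization in $\mathbb{C}[x_g : g \in C]$, combined with the monicity condition pinning associate classes down to equality, forces the two multisets of irreducible factors to coincide. Consequently the equivalence relation $\sim$ on $\mathrm{Irr}(G)$ defined by $\psi \sim \chi$ iff $\omega_\psi(\widehat{K_g}) = \omega_\chi(\widehat{K_g})$ for every $g \in C$ has exactly $\ell$ classes $X_1, \dotsc, X_\ell$, where $X_j$ consists of those $\psi$ producing the factor $\sum_g \xi_g^j x_g$, and $m_j = \sum_{\psi \in X_j} \psi(1)^2$. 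Lemma~\ref{omegasct} then yields that $\mathcal{P}$ is the $\mathsf{S}$-class partition of a supercharacter theory $\mathsf{S}$ whose basic characters are $\sigma_{X_1}, \dotsc, \sigma_{X_\ell}$.

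To finish I would identify $\chi_j$ with $\sigma_{X_j}$. For any $\psi \in X_j$ we have $\omega_\psi(\widehat{K_g}) = \xi_g^j$ by construction, while Lemma~\ref{omega} gives $\omega_\psi(\widehat{K_g}) = \norm{K_g}\,\sigma_{X_j}(g)/\sigma_{X_j}(1)$. Since $\sigma_{X_j}(1) = \sum_{\psi \in X_j} \psi(1)^2 = m_j$, solving produces $\sigma_{X_j}(g) = m_j \xi_g^j / \norm{K_g}$ for $g \in C$, and the fact that $\sigma_{X_j}$ is constant on $\mathsf{S}$-classes upgrades this to $\sigma_{X_j}(g) = m_j \xi_{\kappa(g)}^j / \norm{K_g}$ for every $g \in G$, exactly matching the stated definition of $\chi_j$.
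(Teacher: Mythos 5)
Your proposal is correct and follows essentially the same route as the paper's own proof: specialize Theorem~\ref{frob} under the substitution $x_c\mapsto x_{\kappa(c)}$, use unique factorization together with monicity in $x_1$ to match the resulting linear factors with the hypothesized ones, invoke Lemma~\ref{omegasct} to obtain the supercharacter theory, and finish with Lemma~\ref{omega} to identify $\chi_j$ with $\sigma_{X_j}$. The only cosmetic difference is that the paper phrases the substitution as an explicit algebra homomorphism $\mathscr{F}$ and tracks the multiplicity bookkeeping $m_j=\sigma_{X_j}(1)$ slightly more formally, but the content is identical.
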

\begin{proof}
Choose a set of conjugacy class representatives $\tilde{C}\subseteq G$ satisfying $C\subseteq\tilde{C}$, and let $\tilde{\kappa}:G\to \tilde{C}$ be the function which sends an element $g\in G$ to its conjugacy class representative lying in $\tilde{C}$. As in the statement of Theorem~\ref{frob}, let $\Theta^\ast$ be the determinant $\norm{x^{}_{\tilde{\kappa}(gh^{-1})}}$; then we have
\[\Theta^\ast=\prod_{\psi\in\mathrm{Irr}(G)}\left(\,\sum_{g\in \tilde{C}}\omega_\psi\left(\widehat{\mathrm{cl}_G(g)}\right)x_g\right)^{\xstrut\psi(1)^2},\]
and each of these linear factors is monic in $x_1$. Let $\mathscr{F}:\mathbb{C}[x_g:g\in \tilde{C}]\to\mathbb{C}[x_g:x\in C]$ be the $\mathbb{C}$-algebra homomorphism defined $\mathbb{C}$-linearly by $x_g\mapsto x_{\kappa(g)}$, so that $\Theta_{\mathsf{S}}=\mathscr{F}(\Theta^\ast)$. For each $1\le j\le\ell$, write $\lambda_j=\sum_{g\in C}\xi_g^j x_g$. Since $\mathbb{C}[x_g:x\in C]$ is a unique factorization domain, and each $\lambda_j$ is monic in $x_1$, we must have that $\lambda_j=\mathscr{F}(\mu)$ for some linear factor of $\Theta^\ast$ for each $1\le j\le \ell$. For each $1\le j\le\ell$, define 
\[X_j=\left\{\psi\in\mathrm{Irr}(G):\mathscr{F}\left(\,\sum_{g\in \tilde{C}}\omega_\psi\left(\widehat{\mathrm{cl}_G(g)}\right)x_g\right)=\lambda_j\right\}.\]
Now fix $1\le j\le \ell$. Since
\begin{align*}
\MoveEqLeft[4]\mathscr{F}\left(\,\sum_{g\in \tilde{C}}\omega_\psi\left(\widehat{\mathrm{cl}_G(g)}\right)x_g\right)=\sum_{g\in C}\left(\,\sum_{\substack{h\in \tilde{C}\\\kappa(h)=g}}\omega_\psi\left(\widehat{\mathrm{cl}_G(h)}\right)\right)x_g\\
&=\sum_{g\in C}\omega_\psi\left(\,\sum_{\substack{h\in \tilde{C}\\\kappa(h)=g}}\widehat{\mathrm{cl}_G(h)}\right)x_g=\sum_{g\in C}\omega_\psi\left(\widehat{K_g}\right)x_g,
\end{align*}
for each $\psi\in\mathrm{Irr}(G)$, we have 
\[X_j=\left\{\psi\in\mathrm{Irr}(G):\omega_\psi\left(\widehat{K_g}\right)=\xi_g^j\ \,\text{for every $g\in C$}\right\}.\]
By Lemma~\ref{omegasct},  the partition $\mathcal{P}$ gives the superclasses for a supercharacter theory of $G$, and the supercharacters for that supercharacter theory are exactly the characters $\sigma^{}_{X_i}$, $1\le i\le \ell$. Also note that we must have
\begin{align*}
\lambda_j^{m_j}&=\mathscr{F}\left(\prod_{\psi\in X_i}\left(\sum_{g\in \tilde{C}}\omega_\psi\left(\widehat{\mathrm{cl}_G(g)}\right)x_g\right)^{\xstrut\psi(1)^2}\right)\\
&=\prod_{\psi\in X_j}\mathscr{F}\left(\sum_{g\in \tilde{C}}\omega_\psi\left(\widehat{\mathrm{cl}_G(g)}\right)x_g\right)^{\xstrut\psi(1)^2}\\
&=\prod_{\psi\in X_j}\lambda_j^{\psi(1)^2}=\lambda_j^{\sum_{\psi\in X_j}\psi(1)^2}=\lambda_j^{\sigma^{}_{X_j}(1)},
\end{align*}
so it follows that $m_j=\sigma_{X_j}(1)$. Hence Lemma~\ref{omega} implies that
\[\frac{m_j\xi_{\kappa(g)}^j}{\norm{K_g}}=\frac{\sigma^{}_{X_j}(1)\,\omega_\psi\left(\widehat{K_g}\right)}{\norm{K_g}}=\sigma^{}_{X_j}(g),\]
and this completes the proof.
\end{proof}
Combining Theorems~\ref{sfrob1} and \ref{sfrob2} yields Theorem~\ref{thmB}, which we restate here.

\begin{introthm}
Let $\mathcal{P}$ be a partition of $G$ of length $\ell$ into $G$-invariant parts, and assume that $\{1\}\in\mathcal{P}$. Let $C\subseteq G$ be a set of representatives for the parts of $\mathcal{P}$, and let $x_g$ be independent indeterminates indexed by the elements of $C$.  Let $\kappa:G\to C$ be the function that maps an element of $G$ to its representative in $C$.  Let $\Theta_{\mathcal{P}}$ be the determinant $\norm{x^{}_{\kappa(gh^{-1})}}$.  The partition $\mathcal{P}$ gives the superclasses for a supercharacter theory $\mathsf{S}$ of $G$ if and only if $\Theta_{\mathcal{P}}$ factors into a product of powers of $\ell$ distinct linear factors, each monic in $x_1$. In this event, if 
\[\Theta_{\mathcal{P}}=\prod_{j=1}^\ell\left(\sum_{g\in C}\xi_g^j x_g\right)^{\xstrut m_j}\]
is the factorization of $\Theta_{\mathcal{P}}$ into products of powers of linear factors that are monic in $x_1$, then the basic $\mathsf{S}$-characters are the functions $\chi_j:G\to\mathbb{C}$ defined by
\[\chi_j(g)=\frac{m_j\xi_{\kappa(g)}^j}{\norm{K_g}}.\]
Moreover, $\chi_j(1)=m_j$ for each $1\le j\le \ell$.
\end{introthm}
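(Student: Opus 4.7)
The plan is to stitch together Theorem~\ref{sfrob1} and Theorem~\ref{sfrob2}, which supply the two directions of the equivalence, and then verify the ``moreover'' clause by a direct substitution. The ``if'' direction, including the explicit formula $\chi_j(g)=m_j\xi_{\kappa(g)}^j/\norm{K_g}$ for the basic $\mathsf{S}$-characters, is already precisely the content of Theorem~\ref{sfrob2}, so no new work is needed there.

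For the ``only if'' direction, suppose $\mathcal{P}=\mathrm{Cl}(\mathsf{S})$ for some supercharacter theory $\mathsf{S}$. Theorem~\ref{sfrob1} immediately delivers the factorization
\[\Theta_{\mathcal{P}}=\prod_{\chi\in\mathrm{BCh}(\mathsf{S})}\left(\sum_{g\in C}\frac{\chi(g)\norm{\mathrm{cl}_{\mathsf{S}}(g)}}{\chi(1)}\;x_g\right)^{\xstrut\chi(1)},\]
and since $\norm{\mathrm{BCh}(\mathsf{S})}=\norm{\mathrm{Cl}(\mathsf{S})}=\ell$, the correct number of factors appears. It remains to verify two structural properties of this factorization: monicity in $x_1$ and pairwise distinctness of the linear factors. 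Monicity is immediate once we choose $1\in C$ (permissible because $\{1\}\in\mathcal{P}$): then $\mathrm{cl}_{\mathsf{S}}(1)=\{1\}$, so the coefficient of $x_1$ in each factor is $\chi(1)\cdot 1/\chi(1)=1$. For distinctness, suppose two factors indexed by $\chi\ne\chi'$ coincide; then $\chi(g)/\chi(1)=\chi'(g)/\chi'(1)$ on the representatives in $C$, hence on all of $G$ by constancy on $\mathsf{S}$-classes, so $\chi$ and $\chi'$ are positive scalar multiples of one another. But each basic $\mathsf{S}$-character has the form $\sigma_X=\sum_{\psi\in X}\psi(1)\psi$ with $X\in\mathrm{Ch}(\mathsf{S})$, and distinct basic $\mathsf{S}$-characters correspond to disjoint parts $X,X'\subseteq\mathrm{Irr}(G)$; proportionality of $\sigma_X$ and $\sigma_{X'}$ would then be a nontrivial linear relation among distinct irreducibles with positive coefficients, contradicting linear independence of $\mathrm{Irr}(G)$.

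The moreover clause is a direct calculation from the formula for $\chi_j$: taking $g=1$ we have $\kappa(1)=1$, $K_1=\{1\}$, and $\xi_1^j=1$ because the $j$-th linear factor is monic in $x_1$, so $\chi_j(1)=m_j\cdot 1/1=m_j$. The only step requiring genuine argument is the distinctness verification, which I expect to be the main (modest) obstacle; everything else is bookkeeping that invokes Theorems~\ref{sfrob1} and \ref{sfrob2} and the elementary structure of $\sigma_X$ recalled in Section~\ref{supersection}.
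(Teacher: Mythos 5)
Your proposal follows exactly the paper's route: the paper proves this theorem simply by combining Theorem~\ref{sfrob1} (the ``only if'' direction) with Theorem~\ref{sfrob2} (the ``if'' direction and the character formula). Your additional verifications --- monicity via $1\in C$, distinctness of the linear factors via linear independence of $\mathrm{Irr}(G)$, and the computation $\chi_j(1)=m_j$ --- are correct and in fact supply details the paper leaves implicit.
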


\section{Relationship to the regular representation}
We now work toward proving Theorem~\ref{regular}. In order to accomplish this, we will need to be able to decompose products of superclass sums. A well known formula can be used in the case that the superclasses are the usual conjugacy classes (see Exercise 3.9 of \cite{MI76}) by a straightforward application of column orthogonality. We will employ the same method here --- the only difference is that we will need a version of column orthogonality for supercharacters.
\begin{thm}[{\normalfont \cite[Theorem 3.3]{SB18nil}}]\label{column}Let $\mathsf{S}$ be a supercharacter theory of $G$, and let $g,h\in G$. Then
\[ \sum_{\chi\in\mathrm{Ch}(\mathsf{S})}\frac{\chi(g)\overline{\chi(h)}}{\chi(1)}=\frac{\norm{G}}{\norm{\mathrm{cl}_{\mathsf{S}}(g)}}\]
if $h\in\mathrm{cl}_{\mathsf{S}}(g)$, and is 0 otherwise.
\end{thm}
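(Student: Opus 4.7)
The plan is to derive the identity by playing the two natural bases of $\mathsf{S}$ against each other, in the same spirit as the classical proof of the second orthogonality relation. The superclass sums $\widehat{K}$, $K\in\mathrm{Cl}(\mathsf{S})$, form a basis of orthogonal idempotents under the Hadamard product, while the idempotents $E_X=\sum_{\psi\in X}e_\psi$, $X\in\mathrm{Ch}(\mathsf{S})$, form a basis of orthogonal idempotents under the ordinary product. The orthogonality relation will emerge from comparing the two explicit change-of-basis formulas between these bases.

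First I would make $E_X$ completely explicit as an element of $\mathbb{C}G$. Summing $e_\psi=\tfrac{\psi(1)}{\norm{G}}\sum_{g\in G}\psi(g^{-1})g$ over $\psi\in X$ and recognizing the inner sum as a value of the associated basic $\mathsf{S}$-character $\chi=\sigma_X$, I obtain
\[E_X=\frac{1}{\norm{G}}\sum_{g\in G}\chi(g^{-1})\,g=\frac{1}{\norm{G}}\sum_{g\in G}\overline{\chi(g)}\,g.\]
The second equality uses $\chi(g^{-1})=\overline{\chi(g)}$, which holds for each irreducible constituent of $\chi$ and therefore for the nonnegative combination $\chi$ itself.

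Next, Lemma~\ref{omega} supplies the reverse change of basis: for each $K\in\mathrm{Cl}(\mathsf{S})$ with chosen representative $g_K$,
\[\widehat{K}=\sum_{\chi\in\mathrm{BCh}(\mathsf{S})}\frac{\norm{K}\chi(g_K)}{\chi(1)}\,E_{X_\chi},\]
where $X_\chi\in\mathrm{Ch}(\mathsf{S})$ is the part corresponding to $\chi$. Substituting the explicit formula for $E_{X_\chi}$ into this expansion and then equating the coefficient of an arbitrary $h\in G$ on both sides produces
\[\mathbbm{1}[h\in K]=\frac{\norm{K}}{\norm{G}}\sum_{\chi\in\mathrm{BCh}(\mathsf{S})}\frac{\chi(g_K)\overline{\chi(h)}}{\chi(1)}.\]
Rearranging, and noting that each basic $\mathsf{S}$-character is constant on $K$ so that $\chi(g_K)$ may be replaced by $\chi(g)$ for any $g\in K$, yields the desired column orthogonality relation.

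The main obstacle is not any single hard step but careful bookkeeping: one must verify that each $E_X$ really lies in $\mathsf{S}$ so that the second expansion is a legitimate expansion in a basis of $\mathsf{S}$, invoke linear independence of distinct group elements to compare coefficients of each $h\in G$, and keep track of complex conjugates throughout. Once the two change-of-basis formulas are in hand, the orthogonality relation drops out simply by reading off a coefficient.
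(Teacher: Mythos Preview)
Your argument is correct. Expanding $E_X$ as $\frac{1}{\norm{G}}\sum_{g\in G}\overline{\sigma_X(g)}\,g$, inserting this into the expansion $\widehat{K}=\sum_{\chi}\frac{\norm{K}\chi(g_K)}{\chi(1)}E_{X_\chi}$ from Lemma~\ref{omega}, and reading off the coefficient of a fixed $h\in G$ gives exactly the asserted identity. The bookkeeping you flag is all routine: that each $E_X\in\mathsf{S}$ is part of the definition of $\mathrm{Ch}(\mathsf{S})$ in Section~\ref{supersection}, and linear independence of the $h\in G$ in $\mathbb{C}G$ is immediate.

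As for comparison with the paper: Theorem~\ref{column} is quoted without proof, with a citation to \cite[Theorem~3.3]{SB18nil}, so there is no argument here to compare against. Your proof is the natural one---it is the supercharacter analogue of the standard derivation of the second orthogonality relation from the first via the idempotent formula, and is essentially what one would expect the cited proof to be.

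One cosmetic point: in the statement as written the sum is indexed by $\chi\in\mathrm{Ch}(\mathsf{S})$, but $\mathrm{Ch}(\mathsf{S})$ is a partition of $\mathrm{Irr}(G)$ rather than a set of characters; the intended index set is $\mathrm{BCh}(\mathsf{S})$, which is what you use.
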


\begin{cor}\label{constants}
Let $\mathsf{S}$ be a supercharacter theory of $G$. For each $K\in\mathrm{Cl}(\mathsf{S})$, let $g_K\in\mathrm{Cl}(\mathsf{S})$. Then for each $K,L\in\mathrm{Cl}(\mathsf{S})$
\[\widehat{K}\widehat{L}=\sum_{C\in\mathrm{Cl}(\mathsf{S})}\left(\frac{\norm{K}\norm{L}}{\norm{G}}\sum_{\chi\in\mathrm{BCh}(\mathsf{S})}\frac{\chi(g_K)\chi(g_L)\overline{\chi(g_C)}}{\chi(1)^2}\right)\widehat{C}.\]
\end{cor}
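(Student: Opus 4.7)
The plan is to mimic the classical derivation of the class-algebra multiplication constants in $Z(\mathbb{C}G)$ (as in Exercise~3.9 of \cite{MI76}), using Lemma~\ref{omega} in place of the formula $\omega_\psi(\widehat{C}) = \norm{C}\psi(g_C)/\psi(1)$ and Theorem~\ref{column} in place of classical column orthogonality. Since $\{\widehat{K} : K \in \mathrm{Cl}(\mathsf{S})\}$ is a basis for $\mathsf{S}$ under the ordinary product, there exist unique constants $a_{KLC} \in \mathbb{C}$ with
\[\widehat{K}\widehat{L} = \sum_{C \in \mathrm{Cl}(\mathsf{S})} a_{KLC}\, \widehat{C},\]
and the goal is to identify $a_{KLM}$ with the bracketed expression in the statement.

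Fix $\chi \in \mathrm{BCh}(\mathsf{S})$ and pick any $\psi \in \mathrm{Irr}(\chi)$. Applying the algebra homomorphism $\omega_\psi \colon Z(\mathbb{C}G) \to \mathbb{C}$ to the identity above and invoking Lemma~\ref{omega} on each superclass sum yields
\[\frac{\norm{K}\,\norm{L}\, \chi(g_K)\chi(g_L)}{\chi(1)^2} = \sum_{C \in \mathrm{Cl}(\mathsf{S})} a_{KLC}\, \frac{\norm{C}\chi(g_C)}{\chi(1)},\]
which is one scalar identity per $\chi \in \mathrm{BCh}(\mathsf{S})$.

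To isolate a specific $a_{KLM}$, I would multiply through by $\overline{\chi(g_M)}/\chi(1)$, sum over $\chi \in \mathrm{BCh}(\mathsf{S})$, and interchange the order of summation on the right. The inner sum $\sum_\chi \chi(g_C)\overline{\chi(g_M)}/\chi(1)$ then equals $\norm{G}/\norm{M}$ when $C = M$ and vanishes otherwise by Theorem~\ref{column}, so the right-hand side collapses to $a_{KLM}\,\norm{M}\cdot \norm{G}/\norm{M} = a_{KLM}\,\norm{G}$. Solving for $a_{KLM}$ produces exactly the claimed formula. No substantive obstacle is anticipated; the argument is just the standard column-orthogonality inversion for ordinary class constants, and both supercharacter-theoretic ingredients are already available in the excerpt. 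The one point that deserves care is the observation, supplied by Lemma~\ref{omega}, that on $\mathsf{S}$ the value of $\omega_\psi$ depends only on the basic character $\chi \in \mathrm{BCh}(\mathsf{S})$ containing $\psi$, so that the scalar equation above is well defined independently of the choice of $\psi \in \mathrm{Irr}(\chi)$.
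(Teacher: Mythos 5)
Your argument is correct and is essentially the paper's own proof: the paper applies the coefficient functionals $\Omega_\chi$ (which Lemma~\ref{omega} identifies with $\omega_\psi$ for any $\psi\in\mathrm{Irr}(\chi)$, the well-definedness point you rightly flag) to $\widehat{K}\widehat{L}=\sum_C a_{KLC}\widehat{C}$ and then inverts using Theorem~\ref{column}, exactly as you describe. One bookkeeping slip: the weight should be $\overline{\chi(g_M)}$ rather than $\overline{\chi(g_M)}/\chi(1)$, since the right-hand side of your per-$\chi$ identity already carries one factor of $\chi(1)$ in the denominator; with that corrected weight the inner sum is precisely the $\sum_\chi \chi(g_C)\overline{\chi(g_M)}/\chi(1)$ you wrote down, and the left-hand side acquires the $\chi(1)^2$ appearing in the stated formula.
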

\begin{proof}
For each $\chi\in\mathrm{BCh}(\mathsf{S})$ and each $K\in\mathrm{Cl}(\mathsf{S})$, let $\Omega_\chi(\widehat{K})$ denote the  coefficient of $E_\chi$ when expressing $\widehat{K}$ in terms of the $\{E_\chi:\chi\in\mathrm{BCh}(\mathsf{S})\}$ basis. Recall that each $\Omega_\chi$ is an algebra homomorphism and that $\Omega_\chi(\widehat{K})=\chi(g_K)\norm{K}/\chi(1)$. Let $K,L\in\mathrm{Cl}(\mathsf{S})$ and write $\widehat{K}\widehat{L}=\sum_{C\in\mathrm{Cl}(\mathsf{S})}a_{K,L,C}\widehat{C}$. Then
\begin{align*}
\norm{K}\norm{L}\sum_{\chi\in\mathrm{BCh}(\mathsf{S})}\frac{\chi(g_K)\chi(g_L)}{\chi(1)^2}&=\sum_{\chi\in\mathrm{BCh}(\mathsf{S})}\Omega_\chi(\widehat{K})\Omega_\chi(\widehat{L})\\
&=\sum_{\chi\in\mathrm{BCh}(\mathsf{S})}\Omega_\chi(\widehat{K}\widehat{L})\\
&=\sum_{M\in\mathrm{Cl}(\mathsf{S})}a_{K,L,M}\sum_{\chi\in\mathrm{BCh}(\mathsf{S})}\Omega_\chi(\widehat{M})\\
&=\sum_{M\in\mathrm{Cl}(\mathsf{S})}a_{K,L,M}\norm{M}\sum_{\chi\in\mathrm{BCh}(\mathsf{S})}\frac{\chi(g_M)}{\chi(1)}.
\end{align*}
Fix $C\in\mathrm{Cl}(\mathsf{S})$. Then
\begin{align*}
\MoveEqLeft[5]\norm{K}\norm{L}\sum_{\chi\in\mathrm{BCh}(\mathsf{S})}\frac{\chi(g_K)\chi(g_L)\overline{\chi(g_C)}}{\chi(1)^2}\\&=\sum_{M\in\mathrm{Cl}(\mathsf{S})}a_{K,L,M}\norm{M}\sum_{\chi\in\mathrm{BCh}(\mathsf{S})}\frac{\chi(g_M)\overline{\chi(g_C)}}{\chi(1)}\\
&=\sum_{M\in\mathrm{Cl}(\mathsf{S})}a_{K,L,M}\norm{M}\delta_{M,C}\frac{\norm{G}}{\norm{C}},
\end{align*}
by Theorem~\ref{column}. The right hand side is just $\norm{G}a_{K,L,C}$, whence the result.
\end{proof}

We will also need the following result, due to Frobenius. The statement we give here is taken from Hawkins' expository paper \cite{hawkinsorigins}.
\begin{thm}[{\normalfont Frobenius; cf. \cite[Theorem 5.3]{hawkinsorigins}}]\label{regfrob}
Let the $n^2m$ numbers $a_{ijk}$ {\upshape($i,j=1,2,\dotsc,n;\ k=1,2,\dotsb,m$\/)} satisfy
\[\sum_{s=1}^na_{psj}a_{sqk}=\sum_{s=1}^na_{psk}a_{spj}.\]
Then if $A_k=(a_{ijk})$, $k=1,2,\dotsb,m$, there exists numbers $r_k^{(s)}$ {\upshape($s=1,2,\dotsc,n;\ k=1,2,\dotsb,m$\/)} such that
\[\det\left(\,\sum_{k=1}^mA_kx_k\right)=\prod_{s=1}^n\left(\,\sum_{k=1}^m r_k^{(s)}x_k\right).\]
Furthermore, when $m=n$ and $a_{ijk}=a_{ikj}$, the $r^{(s)}_k$ are the only solutions to 
\[r_j^{(s)}r_k^{(s)}=\sum_{i=1}^na_{ijk}r_i^{(s)}.\]
\end{thm}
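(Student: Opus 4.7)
The plan is to read the hypothesis as a commutativity condition on the $A_k$ and then exploit simultaneous triangularization over $\mathbb{C}$. Interpreting $a_{ijk}$ as the $(i,j)$-entry of $A_k$, the left-hand side of the displayed identity is exactly $(A_j A_k)_{pq}$. Reading the right-hand side as $(A_k A_j)_{pq}$ (which requires correcting what appears to be a misprint $a_{spj}\to a_{sqj}$, since otherwise the $q$-dependence on the two sides cannot match), the hypothesis becomes $A_j A_k = A_k A_j$ for every pair $j,k$, so the family $\{A_1,\dotsc,A_m\}$ is pairwise commuting. By the classical result that a commuting family of matrices over $\mathbb{C}$ admits a common eigenvector, induction on $n$ produces an invertible $U\in \mathrm{GL}_n(\mathbb{C})$ such that $T_k \coloneqq U^{-1}A_k U$ is upper triangular for every $k$.

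Writing $r_k^{(s)}$ for the $(s,s)$-entry of $T_k$, the matrix $\sum_k T_k x_k$ is upper triangular with diagonal entries $\sum_k r_k^{(s)} x_k$, so similarity invariance of the determinant yields
\[\det\left(\sum_{k=1}^m A_k x_k\right)=\det\left(\sum_{k=1}^m T_k x_k\right)=\prod_{s=1}^n\left(\sum_{k=1}^m r_k^{(s)} x_k\right),\]
which is the desired factorization.

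For the second assertion, assume $m=n$ and $a_{ijk}=a_{ikj}$. Combined with commutativity, this symmetry allows one to define a commutative associative product on $\mathbb{C}^n$ by $e_j\cdot e_k = \sum_i a_{ijk} e_i$; the assignment $e_k\mapsto A_k$ then realizes $(\mathbb{C}^n,\cdot)$ as the subalgebra $\mathcal{A}\subseteq M_n(\mathbb{C})$ generated by the $A_k$. The equation $r_j^{(s)} r_k^{(s)} = \sum_i a_{ijk} r_i^{(s)}$ is precisely the assertion that the linear functional $e_k\mapsto r_k^{(s)}$ extends to a $\mathbb{C}$-algebra homomorphism $\mathcal{A}\to\mathbb{C}$. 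The $s$-th diagonal-entry map from the triangularization above supplies such a homomorphism for each $s$, so each tuple $(r_1^{(s)},\dotsc,r_n^{(s)})$ is a solution; conversely, any $n$-dimensional commutative $\mathbb{C}$-algebra admits at most $n$ distinct characters (they form a linearly independent subset of $\mathcal{A}^*$), so no further solutions exist.

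The main obstacle is the uniqueness clause in the non-generic case: when two diagonal tuples $(r_1^{(s)},\dotsc,r_n^{(s)})$ coincide, the triangularization furnishes fewer than $n$ distinct characters, and to recover the count of $n$ one must either interpret the list \emph{with multiplicity} or pass to the semisimple quotient $\mathcal{A}/J(\mathcal{A})$. The factorization half of the proof, by contrast, is a clean corollary of simultaneous triangularization and presents no real difficulty.
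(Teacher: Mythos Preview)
The paper does not supply a proof of this theorem: it is quoted verbatim from Hawkins' survey as a classical result of Frobenius and then used as a black box in the proof of Theorem~\ref{regular}. So there is no ``paper's own proof'' to compare against; your write-up is effectively a proposed proof of a cited lemma.

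Your argument for the factorization is the standard one and is correct: once the displayed hypothesis is read (after the evident misprint $a_{spj}\to a_{sqj}$) as $A_jA_k=A_kA_j$, simultaneous triangularization of a commuting family over $\mathbb{C}$ immediately yields the product of linear forms.

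The second clause has a real gap. Your entire argument---both that each diagonal tuple $(r_1^{(s)},\dotsc,r_n^{(s)})$ solves $r_jr_k=\sum_i a_{ijk}r_i$, and that solutions are characters of an $n$-dimensional algebra---rests on the identity
\[
A_jA_k=\sum_{i=1}^n a_{ijk}A_i,
\]
equivalently on the map $e_k\mapsto A_k$ being multiplicative for the product $e_j\cdot e_k=\sum_i a_{ijk}e_i$. You assert this, but it does not follow from commutativity together with the symmetry $a_{ijk}=a_{ikj}$ alone: writing out entries, one needs $\sum_s a_{psj}a_{sqk}=\sum_i a_{ijk}a_{pqi}$, and the two hypotheses only let you shuffle the last two indices and swap $j\leftrightarrow k$, which is not enough to reach that identity in general. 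In the paper's intended application the $A_k$ arise as $\mu(\widehat{C_k})$ for the regular representation $\mu$ of an associative algebra, so $A_jA_k=\sum_i a_{ijk}A_i$ is automatic there; abstractly you must either add this as an explicit consequence of how Frobenius sets up the $a_{ijk}$ (in Hawkins' account they are structure constants of a commutative associative algebra from the outset) or supply a direct derivation. Once that identity is in hand, your character-counting argument goes through, with the multiplicity caveat you already flagged.
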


We may now prove Theorem~\ref{regular}, which we restate here.

\begin{introthm}
Let $\mathsf{S}\subseteq Z(\mathbb{C}{G})$ be a supercharacter theory of $G$. Write $\mathrm{Cl}(\mathsf{S})=\{C_1,\dotsc,C_n\}$, and let $g_i\in C_i$ for each $i$. Let $x_1,\dotsc,x_n$ be indeterminates. If $\mu$ is the regular representation of $\mathsf{S}$, then
\[\det\left(\sum_{k=1}^n\mu\bigl(\widehat{C_k}\bigr)x_k\right)=\prod_{\chi\in\mathrm{BCh}(\mathsf{S})}\left(\sum_{k=1}^n\frac{\chi(g_k)\norm{C_k}}{\chi(1)}\;x_k\right).\]
\end{introthm}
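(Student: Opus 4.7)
The plan is to apply Theorem~\ref{regfrob} to the matrices $A_k=\mu(\widehat{C_k})$ giving left multiplication by superclass sums on $\mathsf{S}$, setting $a_{ijk}=(A_k)_{ij}$ so that $\widehat{C_k}\widehat{C_j}=\sum_i a_{ijk}\widehat{C_i}$. The hypotheses of Theorem~\ref{regfrob} follow from the fact that $\mathsf{S}$ is a commutative associative algebra: the symmetry $a_{ijk}=a_{ikj}$ reflects commutativity (which holds since $\mathsf{S}\subseteq Z(\mathbb{C}G)$), while the bilinear identity $\sum_s a_{psj}a_{sqk}=\sum_s a_{psk}a_{spj}$ is the coordinate form of associativity combined with that symmetry. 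Corollary~\ref{constants} provides the explicit formula for the $a_{ijk}$ should a direct verification be required.

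Once the hypotheses are in place, Theorem~\ref{regfrob} furnishes numbers $r_k^{(s)}$ with $\det\bigl(\sum_k A_k x_k\bigr)=\prod_{s=1}^n\bigl(\sum_k r_k^{(s)}x_k\bigr)$, characterized as the unique solutions to the system $r_j r_k=\sum_i a_{ijk}r_i$. I would then exhibit one solution for each basic $\mathsf{S}$-character: for $\chi\in\mathrm{BCh}(\mathsf{S})$, set $r_k^{(\chi)}=\omega_\chi(\widehat{C_k})$, which by Lemma~\ref{omega} equals $\chi(g_k)\norm{C_k}/\chi(1)$. Since $\omega_\chi$ is an algebra homomorphism,
\[r_j^{(\chi)}r_k^{(\chi)}=\omega_\chi(\widehat{C_j})\omega_\chi(\widehat{C_k})=\omega_\chi\bigl(\widehat{C_j}\widehat{C_k}\bigr)=\sum_i a_{ijk}\,\omega_\chi(\widehat{C_i})=\sum_i a_{ijk}r_i^{(\chi)},\]
so each $\chi$ yields a valid solution to the Frobenius system.

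To finish, I would observe that there are $\norm{\mathrm{BCh}(\mathsf{S})}=\dim\mathsf{S}=n$ such solutions, and they are pairwise distinct: as $\mathsf{S}$ is a commutative semisimple $\mathbb{C}$-algebra of dimension $n$, its $n$ algebra homomorphisms to $\mathbb{C}$ are realized bijectively by the $\omega_\chi$, so the tuples $(r_k^{(\chi)})_{k=1}^n$ are all different. By the uniqueness clause in Theorem~\ref{regfrob}, these $n$ tuples must be, up to reindexing $s$, exactly the $(r_k^{(s)})$ appearing in the Frobenius factorization, which gives the asserted formula.

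The step I expect to require the most care is verifying the bilinear identity of Theorem~\ref{regfrob} for our $a_{ijk}$; while it ought to reduce to associativity plus the commutative symmetry, it may involve a brief index manipulation that is not one-line. A safety-net alternative that sidesteps Theorem~\ref{regfrob} entirely is to simultaneously diagonalize the commuting matrices $\mu(\widehat{C_k})$: in a basis of primitive idempotents of $\mathsf{S}$, each $\mu(\widehat{C_k})$ is the diagonal matrix with entries $\omega_\chi(\widehat{C_k})$ indexed by $\chi\in\mathrm{BCh}(\mathsf{S})$, so the determinant of $\sum_k \mu(\widehat{C_k})x_k$ reads off as the desired product directly.
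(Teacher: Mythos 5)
Your proposal is correct and follows the same overall strategy as the paper: apply Theorem~\ref{regfrob} to the structure-constant matrices $A_k=\mu\bigl(\widehat{C_k}\bigr)$ and identify the tuples $r_k^{(s)}=\chi_s(g_k)\norm{C_k}/\chi_s(1)$ as the solutions of the Frobenius system $r_jr_k=\sum_i a_{ijk}r_i$. The one genuine difference is how that system is verified. The paper first proves Corollary~\ref{constants}, an explicit formula for the $a_{ijk}$ obtained from the supercharacter column orthogonality relation (Theorem~\ref{column}), and then checks the system by a several-line orthogonality computation. You instead observe that $r_k^{(\chi)}=\Omega_\chi\bigl(\widehat{C_k}\bigr)$ is the value of an algebra homomorphism $\mathsf{S}\to\mathbb{C}$ (what you call $\omega_\chi$; note Lemma~\ref{omega} is literally stated for $\omega_\psi$ with $\psi\in\mathrm{Irr}(\chi)$, and the homomorphism you want is the coefficient functional $\Omega_\chi$ introduced in the proof of Corollary~\ref{constants} --- the restriction of any such $\omega_\psi$ to $\mathsf{S}$), so the identity $r_j^{(\chi)}r_k^{(\chi)}=\sum_i a_{ijk}r_i^{(\chi)}$ is immediate from multiplicativity. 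This bypasses Corollary~\ref{constants} and Theorem~\ref{column} entirely and is the cleaner argument; you also address the distinctness of the $n$ solutions, which the paper leaves implicit. Your safety-net alternative --- diagonalizing all $\mu\bigl(\widehat{C_k}\bigr)$ simultaneously in the idempotent basis $\{E_\chi\}$ --- is also valid and would avoid Theorem~\ref{regfrob} altogether, though the paper evidently prefers to route the result through Frobenius's classical theorem for historical resonance.
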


\begin{proof}
For each $1\le j,k\le n$, define the integers $a_{ijk}$ by
\[\widehat{C_j}\widehat{C_k}=\sum_{i=1}^na_{ijk}\widehat{C_i}.\]
Write $A_k= (a_{ijk})$. Observe that $\mu(\widehat{C_k})=A_k$ for each $k$; i.e. the function $\widehat{C_k}\mapsto A_k$ is the right regular representation of $\mathsf{S}$.
Since $\mathsf{S}$ is abelian, $A_kA_j=A_jA_k$ and $\widehat{C_j}\widehat{C_k}=\widehat{C_k}\widehat{C_j}$ for every $1\le j,k\le n$. In particular, the numbers $a_{ijk}$ satisfy all of the conditions of Theorem~\ref{regfrob}. 

Write $\mathrm{BCh}(\mathsf{S})=\{\chi_i:1\le i\le n\}$. For each $1\le k\le n$, define the numbers $r_k^{(s)}$ by
\[r_k^{(s)}=\frac{\chi_s(g_k)\norm{C_k}}{\chi_s(1)}.\]
Observe that $\inner{\chi,\chi}=\chi(1)$ if $\chi\in\mathrm{BCh}(\mathsf{S})$. Also note that 
\[a_{ijk}=\frac{\norm{C_j}\norm{C_k}}{\norm{G}}\sum_{t=1}^n\frac{\chi_t(g_j)\chi_t(g_k)\overline{\chi_t(g_i)}}{\chi_t(1)^2},\] for each $1\le i,j,k\le n$ by Corollary~\ref{constants}. Thus
\begin{align*}
\sum_{s=1}^na_{ijk}r_i^{(s)}&=\left(\,\sum_{i=1}^n\frac{\norm{C_j}\norm{C_k}}{\norm{G}}\sum_{t=1}^n\frac{\chi_t(g_j)\chi_t(g_k)\overline{\chi_t(g_i)}}{\chi_t(1)^2}\right)\cdot \frac{\chi_s(g_i)\norm{C_i}}{\chi_s(1)}\\
&=\left(\,\sum_{t=1}^n\frac{\norm{C_j}\norm{C_k}\chi_t(g_j)\chi_t(g_k)}{\chi_t(1)^2\chi_s(1)}\right)\cdot \frac{1}{\norm{G}}\sum_{i=1}^n\norm{C_i}\chi_s(g_i)\overline{\chi_t(g_i)}\\
&=\left(\,\sum_{t=1}^n\frac{\norm{C_j}\norm{C_k}\chi_t(g_j)\chi_t(g_k)}{\chi_t(1)^2\chi_s(1)}\right)\cdot \delta_{s,t}\chi_t(1)\\
&=\frac{\chi_s(g_j)\chi_s(g_k)\norm{C_j}\norm{C_k}}{\chi_s(1)^2}\\
&=r_j^{(s)}r_k^{(s)}
\end{align*}
for every $1\le j,k\le n$. Hence the equation
\[r_j^{(s)}r_k^{(s)}=\sum_{i=1}^na_{ijk}r_i^{(s)}\]
holds for every $j,k$ and so it follows from Theorem~\ref{regfrob} that 
\begin{align*}
\det\left(\sum_{k=1}^n\mu\bigl(\widehat{C_k}\bigr)x_k\right)&=\det\left(\,\sum_{k=1}^mA_kx_k\right)=\prod_{s=1}^n\left(\,\sum_{k=1}^m r_k^{(s)}x_k\right)\\
&=\prod_{\chi\in\mathrm{BCh}(\mathsf{S})}\left(\sum_{k=1}^n\frac{\chi(g_k)\norm{C_k}}{\chi(1)}\;x_k\right),
\end{align*}
as desired.
\end{proof}

\end{document}